\newcommand{\E}{\operatorname{E}}
\newcommand{\cov}{\operatorname{Cov}}
\newcommand{\var}{\operatorname{Var}}
\newtheorem{theorem}{Theorem}[]
\newtheorem{proposition}[]{Proposition}
\newtheorem{corollary}[]{Corollary}
\newtheorem{definition}[]{Definition}
\newtheorem{assumption}[]{Assumption}
\theoremstyle{definition}
\newtheorem{remark}[]{Remark}
\newtheorem{example}[]{Example}
\begin{document}
\title{Likelihood distortion and Bayesian local robustness}
\author[1,2\footnote{antonio.dinoia(\sout{at})stat.math.ethz.ch}]{{Antonio} {Di Noia}}
\author[3]{{Fabrizio} {Ruggeri}}
\author[2,4]{{Antonietta} {Mira}}
\affil[1]{Seminar for Statistics, Department of Mathematics, ETH Zurich}
\affil[2]{Faculty of Economics, Euler Institute, Università della Svizzera italiana}
\affil[3]{Institute for Applied Mathematics and Information Technologies, National Research Council of Italy}
\affil[4]{Department of Science and High Technology, University of Insubria}
\date{\today}
\setcounter{Maxaffil}{0}
\renewcommand\Affilfont{\itshape\small}
\maketitle
\begin{abstract}
Robust Bayesian analysis has been mainly devoted to detecting and measuring robustness w.r.t.\ the prior distribution. Many contributions in the literature aim to define suitable classes of priors which allow the computation of variations of quantities of interest while the prior changes within those classes. The literature has devoted much less attention to the robustness of Bayesian methods w.r.t.\ the likelihood function due to mathematical and computational complexity, and because it is often arguably considered a more objective choice compared to the prior. 
In this contribution, we propose a new approach to Bayesian local robustness, mainly focusing on robustness w.r.t.\ the likelihood function. Successively, we extend it to account for robustness w.r.t.\ the prior, as well as the prior and the likelihood jointly.
This approach is based on the notion of distortion function introduced in the literature on risk theory. The novel robustness measure is a local sensitivity measure that turns out to be very tractable and easy to compute for several classes of distortion functions. Asymptotic properties are derived, and numerical experiments illustrate the theory and its applicability for modelling purposes.
\end{abstract}

\noindent {\bf Keywords:} Bayesian robustness, local sensitivity, distortion function, distorted likelihood.

\section{Introduction}
\label{sec:intro}
\subsection{Background and contributions}
Robustness has been defined as the insensitivity of statistical procedures to small deviations from assumptions \citep{huber2011robust} and, specifically, robust Bayesian analysis aims to detect and measure the
uncertainty and sensitivity induced by the common modelling choices necessary in Bayesian statistics, i.e. the prior, the likelihood and the loss function. The most explored problem in the literature is related to the definition of suitable classes of priors which allow the derivation of ranges for some quantities of interest while the prior changes in those classes; this approach is called global sensitivity or global robustness analysis. Some of those classes are defined to be neighbourhoods of the elicited prior and might define a topology on the space of probability measures. Among them, we could mention the $\epsilon$-contamination class \citep{berger86}, the mixture class \citep{gelfand1991bayesian} also known as the geometric $\epsilon$ -contamination class, the density ratio class \citep{deroberts1981bayesian, wasserman1992invariance}, and the classes based on concentration functions \citep{Fortini2000}. Other classes have been defined through conditions on the marginal distribution of the data; see \cite{betro1994}.
Moreover, \cite{stanca2023robust} recently introduced a choice-theoretic axiomatisation of the prior robustness problem.
An alternative approach is based on measuring the impact of infinitesimal perturbations: this is called local sensitivity or local robustness analysis; see e.g.\ \cite{gustafson1995local}, \cite{Ruggeri93} and \cite{ruggeri1995density}. For some proposals in the case of robustness w.r.t.\ the loss function, see \cite{makov}, \cite{martin98}, \cite{martin2003joint} and references therein.

The literature has devoted much less attention to Bayesian robustness w.r.t\ the likelihood. Some proposals are presented in \cite{dey1996local} and \cite{kurtek2015bayesian}, however, although the likelihood is often considered a more objective choice compared to prior, we believe that one of the major reasons why the literature lacks such a methodological tool relies on mathematical intractability.

The main contribution of this work is the proposal of a new approach to Bayesian robustness w.r.t.\ the likelihood function, which can also be easily extended to measure robustness w.r.t.\ the prior and to both. This approach is based on the notion of distortion function that has been proposed in the literature on risk theory, mathematical finance, and actuarial science; see, e.g.\ \cite{wang_1996} and \cite{yaari}. In the context of Bayesian robustness, distortion functions have been successfully employed in \cite{distorsion_priors2016} and \cite{ruggeri2021new} to build the so-called distorted bands of priors, which allow a simple way to measure global robustness w.r.t.\ the prior. Here we propose a local sensitivity measure that is very easy to compute under certain classes of distortion functions.

\subsection{Notation and setting}
Let $(\mathcal{X},\mathcal{F},P)$ and $(\Theta,\mathcal{B},\Pi)$ be, respectively, the sample probability space and the parameter probability space, and let us assume to model random experiments by adopting a statistical model $(P_\theta: \theta \in \Theta)$ for the population distribution. 
Let $X:\mathcal{X}\to \mathbb{R}^d$ be a random vector (r.v.) with distribution $P_\theta$, and let $F_X$, $S_X$ and $f_X$ be respectively its distribution, survival, and density (assuming it admits a version) functions parameterized by $\theta\in \Theta \subseteq \mathbb{R}^k$. We explicitly work with absolutely continuous distributions since it makes the notation and mathematical development easier; however, we also show that all the results hold for discrete distributions as well. 
Let $\theta$ be the realisation of a r.v.\ $\vartheta:\Theta \to \Theta$ with distribution $\Pi$, distribution, survival and density (assuming it admits a version) functions $F_\vartheta$, $S_\vartheta$ and $f_\vartheta$, respectively.  
Let us denote the Bayesian statistical model by the joint r.v.\ $(X,\vartheta)$ defined on the product measurable space $(\mathcal{X}\times \Theta,\mathcal{F}\otimes \mathcal{B} )$. We denote the Lebesgue measure on $\mathbb{R}^d$ or $\mathbb{R}^k$ by $\lambda$ and, for a measure $\mu$ on a given measurable space $(\Omega, \mathcal{A})$, let $L^p(\mu)= \{f: \Omega \to \mathbb{R}^d \,\, \mathrm{measurable} \,\,\mathrm{s.t.} \,\int_{\Omega} f(\omega)^p d\mu(\omega)<\infty\}$. Given a distribution $\Pi$ we denote the expectation, variance and covariance operators w.r.t.\ $\Pi$ by $\E_\Pi$, $\var_\Pi$ and $\cov_\Pi$, respectively.
Throughout the paper, unless differently specified, we consider an independent and identically distributed (i.i.d.) random sample $X_1,\dots,X_n$ where each r.v.\ has the same distribution of $X$, and we adopt the compact notation $X_{1:n}$ and $x_{1:n}$ for its realisation.

\subsection{Organisation of the paper}
The paper is organised as follows:
Section~\ref{sec:dist} introduces the distortion technique and a new class of local sensitivity measures w.r.t.\ the likelihood, the prior, and both.
In Section~\ref{sec:classes}, we provide a closed-form expression of the local sensitivity under power distortion, as well as upper and lower bounds;
in addition, extensions to other distortion functions are presented.
In Section~\ref{sec:asymptotic_behaviour}, some asymptotic properties of the proposed robustness measure are presented; in particular, we derive its asymptotic behaviour under posterior consistency. In Section~\ref{sec:num}, we work with both simulated and real data to verify the theoretical results and to illustrate the applicability of the proposed measure, providing promising insights on model choice and practical robustness analysis. Conclusions and future research directions are presented in Section~\ref{sec:conc}. The proofs of all theoretical results are provided in Appendix~\ref{sec:proofs}.

\section{The distortion method and local robustness}
\label{sec:dist}
Following \cite{distorsion_priors2016}, we recall the definition of distortion function.
\begin{definition}
    A function $h:[0,1]\to [0,1]$ is a distortion function if it is differentiable, monotone non-decreasing and such that $h(0)=0$ and $h(1)=1$.
\end{definition}
Under the setting introduced in Section~\ref{sec:intro}, let us consider the composition $F_{X_h} :=h \circ F_X$, which defines a new r.v.\ $X_h$ with distribution $P_{\theta,h}$, distribution function $F_{X_h}(x |  \theta)=h(F_X(x |  \theta))$ and density
\begin{align}
    \label{eq:cdf}
    f_{X_h}(x |  \theta)= \frac{\partial F_{X_h}}{\partial x}(x |  \theta) =  f_X(x |  \theta)h'(F_X(x |  \theta)).
\end{align}
Similarly to \eqref{eq:cdf}, if we compose the survival function, letting $S_{X_h}:=h \circ S_X$, it follows
\begin{align}
\label{eq:survival}
    \Tilde{f}_{X_h}(x |  \theta)= -\frac{\partial S_{X_h}}{\partial x}(x |  \theta) = -f_X(x |  \theta) (-h'(S_X(x |  \theta))).
\end{align}
Identical considerations can be made if we consider the composition $F_{\vartheta_h}=h\circ F_\vartheta$, obtaining a distortion of the prior with associated distribution denoted by $\Pi_h$. Moreover, it is worth noting that \eqref{eq:cdf} and \eqref{eq:survival} specify weighted versions of the original r.v.\ with weight function $w(x)= h'(F_X(x |  \theta)) $ or $w(x)= h'(S_X(x |  \theta)) $, in the same spirit as \cite{blazej2008preservation} and \cite{ruggeri2021new}.
We are now ready to introduce the distorted likelihood and the distorted posterior distribution. For ease of exposition, in the following definition, we directly use Bayes' rule to obtain a version of the distorted posterior distribution omitting the usual measurability conditions.
\begin{definition}
    Let $X_{1:n}$ be a random sample and let $h$ be a distortion function, we define its $h$-distorted likelihood function as
    \begin{align}
    \label{eq:lik}
        \mathcal{L}_h(\theta)=f_{X_{h,1:n}}(x_{1:n}|\theta)= f_{X_{1:n}}(x_{1:n}|\theta) h'(F_{X_{1:n}}(x_{1:n}|\theta)).
    \end{align}
    Moreover, (a version of) the distorted posterior distribution based on the $h$-distorted likelihood $\mathcal{L}_h$ and on the $h$-distorted prior distribution $\Pi_h$ is given by
    \begin{align*}
        \Pi^{n}_h(B):=\Pi_h(B| x_{1:n})= \frac{\int_B \mathcal{L}_h(\theta) d \Pi_h(\theta)}{\int_\Theta \mathcal{L}_h(\theta)d\Pi_h(\theta)}
    \end{align*}
    for every $B\in \mathcal{B}$. Moreover, for notation convenience, we will use $\Pi^n$ and $\Pi(\cdot | x_{1:n})$ interchangeably to denote the non-distorted posterior distribution.
\end{definition}
Our development will mainly focus on the case where the distorted posterior is obtained as a single distortion of the likelihood. Moreover, it is straightforward that when $X_{1:n}$ are independent copies of $X$, then \eqref{eq:lik} factorizes as
\begin{align*}
    \mathcal{L}_h(\theta)= \prod_{i=1}^n f_X(x_i |  \theta)h'(F_X(x_i |  \theta)).
\end{align*}
Note that the i.i.d.\ assumption is not strictly necessary; however, it strongly simplifies the notation and the simplicity of the exposition. The relaxation of such assumptions will be discussed successively since all the arguments can be easily extended to a fairly wide class of random processes.

\begin{remark}
\label{rmk:discrete}
When $X$ is a discrete r.v.\ with probability mass function $p_{X}$ and distribution function $F_X$, we obtain an equivalent expression for the distorted version of $p_X$.
Indeed, let $e_j$ be an element of the canonical basis of $\mathbb{R}^d$, since $h$ is differentiable, by the {Mean Value Theorem} there exists a {scalar} $z\in
(F_{X_{1:n}}(x_{1:n}| \theta), F_{X_{1:n}}(x_{1:n} + {e}_j | \theta))
$
for some $j\in \{1,\dots,d\}$, such that the distorted likelihood satisfies  
$
\mathcal{L}_h(\theta) = p_{X_{1:n}}(x_{1:n} | \theta) h'(z).
$
\end{remark}

\begin{example}
    Let $X\sim \mathrm{EF}(\theta)$ where $\mathrm{EF}(\theta)$ denotes the exponential family of distributions with $\theta \in \mathbb{R}^k$ and let also $\vartheta\sim \mathrm{EF}(\xi)$ with $\xi \in \mathbb{R}^q$. Moreover, let $h: [0,1] \to [0,1]$ be a distortion function, it is immediate to note that there exist positive functions $c: \mathbb{R}^k \to \mathbb{R}_+$ and $b: \mathbb{R}^d \to \mathbb{R}_+$ and, for $j=1,\dots,k$, real-valued functions $w_j:\mathbb{R}^k \to \mathbb{R}$ and $t_j: \mathbb{R}^d \to \mathbb{R}$ such that 
    $$\mathcal{L}_h(\theta)= \Big(\prod_{i=1}^n b(x_i)\Big) c(\theta)^n   \exp\Big( \sum_{j=1}^k (w_j(\theta) \sum_{i=1}^n t_j(x_i))\Big) \prod_{i=1}^n h'\Big(\int_{-\infty}^{x_i} f_X(y|\theta) d\lambda(y)\Big)$$
    where $f_X(x|\theta)=b(x)c(\theta) \exp (\sum_{j=1}^k w_j(\theta) t_j(x))$. Therefore, it should be noted that, in general, there is no guarantee that the distorted version $X_h$ belongs to the exponential family. However, the distorted posterior belongs to the exponential family, because the additional component appearing in the likelihood due to distortion given by the function $(x_{1:n},\theta)\mapsto \prod_{i=1}^n h'(\int_{-\infty}^{x_i} f_X(y|\theta) d\lambda(y)) $ can simply be considered a function of $\theta$, thus the exponential family property of $\vartheta_h|x_{1:n}$ is preserved.
\end{example}

\begin{example}
\label{ex:exponential-gamma1}
Let us consider the distortion function $h(y)=\Tilde {h}_\alpha(y)= 1-(1-y)^\alpha$ with $\alpha>1$.
Let $X\sim \mathrm{Exponential}(\theta)$, the $\Tilde{h}_\alpha$-distorted likelihood function based on an i.i.d.\ sample of size $n$ is
\begin{align}
\label{eq:dist_lik_ex}
        \mathcal{L}_{\Tilde{h}_\alpha}(\theta)
        &= \prod_{i=1}^n \theta e^{-\theta x_i}\alpha (1-F_X(x_i |  \theta)) ^{\alpha-1}=(\theta \alpha)^n \exp\Big(-\theta \alpha\sum_{i=1}^nx_i\Big).
\end{align}
Let $a, b >0$ and consider the prior $\vartheta\sim \mathrm{Gamma}(a,b)$, we have that (a version of) the $\Tilde{h}_\alpha$-distorted
posterior density is
 \begin{align*}
    \frac{d \Pi_{\Tilde{h}_\alpha}^{n} }{d\lambda}(\theta) &\propto (\theta\alpha)^n \exp\Big(-\theta \alpha\sum_{i=1}^nx_i\Big) \theta^{a-1}\exp(-b\theta) \\
     &\propto\theta^{a+n-1}\exp\Big(-\theta\Big(b+\alpha\sum_{i=1}^n x_i \Big)\Big),
 \end{align*}
i.e.\ the $\Tilde{h}_\alpha$-distorted
posterior distribution is $\mathrm{Gamma} (a+n,b+\alpha \sum_{i=1}^n x_i )$. Moreover, it is worth noting that the distorted posterior mean
$$\E_{\Pi_{\Tilde{h}_\alpha}^{n}}[\vartheta]= \frac{a+n}{b+\alpha \sum_{i=1}^n x_i}$$
is a decreasing function of $\alpha$, and the non-distorted version is obtained as $\alpha \downarrow 1$. This is a direct consequence of the adopted distortion which assigns more probability mass to small values of $X$ when $\alpha$ increases. Therefore, although the data has not changed, inspecting \eqref{eq:dist_lik_ex} it is at once apparent that the distortion has modified the original model from $X\sim \mathrm{Exponential}(\theta)$ to $X_{\Tilde{h}_\alpha}\sim \mathrm{Exponential}(\alpha \theta)$. At the same time, the Bayesian posterior mean point estimator has changed from $ \E_{\Pi^{n}}[\vartheta]= ({a+n})/({b+\sum_{i=1}^n x_i})$ to $ \E_{ \Pi_{\Tilde{h}_\alpha}^{n}}[\alpha \vartheta]= {\alpha (a+n)}/({b+\alpha \sum_{i=1}^n x_i}).$
Looking at their ratio, we have
\begin{align*}
\frac{\E_{\Pi_{\Tilde{h}_\alpha}^{n}}[\alpha \vartheta]}{\E_{ \Pi ^{n}}[\vartheta]}= \frac{\alpha (b+\sum_{i=1}^n x_i)}{b+\alpha \sum_{i=1}^n x_i}.
\end{align*}
Differentiating the ratio w.r.t.\ $\alpha$, it is evident how it increases in $\alpha$. This makes sense since the distortion function gives more and more weight to small values of $X$ as $\alpha$ increases, consequently implying a decreasing mean and an increasing parameter of the exponential distribution. We can also observe that $\E_{\Pi_{\Tilde{h}_\alpha}^{n}}[\alpha \vartheta]$ ranges from $0$ to $ (a+n)/(\sum_{i=1}^n x_i)$, the latter close to the maximum likelihood estimator, which is obtained for $a=0$.
\end{example}
The following results aim to propose a new measure of local sensitivity obtained by a distortion of the likelihood function. In Bayesian robustness, a measure of local sensitivity is used to detect whether very small perturbations of the prior or likelihood imply significant modifications to the results of the statistical procedure. For instance, this is done by studying the impact of such perturbations on posterior quantities of interest, e.g.\ the posterior mean.
On the other hand, global robustness analysis is performed by studying how such posterior quantities of interest change while the prior or the likelihood range in a certain class.
For instance, in Example~\ref{ex:exponential-gamma1} a Bayesian global robustness analysis might be carried out studying how $\E_{\Pi_{h_\alpha}^{n}}[\vartheta]$ changes as $\alpha$ ranges in a proper interval, while local robustness is carried out studying the impact on $\E_{\Pi_{h_\alpha}^{n}}[\vartheta]$ of an infinitesimal perturbation.

In the following, we provide a notion of Bayesian local sensitivity obtained through distortion functions. Although the definition is valid for local robustness w.r.t.\ both prior and likelihood, we will mainly focus on the latter.
\begin{definition}
\label{def:local_sensitivity}
Let $(X,\vartheta)$ be a Bayesian statistical model and let $h_\alpha$ be a distortion function parameterized by a constant $\alpha \in \mathbb{R}$. The Bayesian (posterior) local sensitivity is defined as
\begin{align}
    \label{eq:posterior_sensitivity}
   \delta^{h_\alpha}_n:= \frac{\partial}{\partial \alpha} \E_{\Pi_{h_\alpha}^n} [g(\vartheta)]\Big|_{\alpha=\alpha_0}
\end{align}
where $\alpha_0$ is such that $h_{\alpha_0}(y)=y$ and
 $g:\Theta \to \mathbb{R}^s$ is a measurable function.
\end{definition}

Note that whenever \eqref{eq:posterior_sensitivity} is defined for all continuous bounded $g$ we can characterise the weak topology of the parameter space so that it is not restrictive to focus on such posterior expectations. For instance, we can take $g(\vartheta)=\vartheta$ obtaining the (distorted) posterior mean, or $g(\vartheta)= \mathbf{1}_{C_{1-\gamma}(x_{1:n})}(\vartheta)$ where $C_{1-\gamma}(x_{1:n})$ is the $100\times (1-\gamma)\%$ credible set for $\theta$ based on the sample realization $X_{1:n}=x_{1:n}$, leading to the (distorted) posterior probability $\Pi^n_{h_\alpha}(C_{1-\gamma}(x_{1:n}))$.
Definition~\ref{def:local_sensitivity} provides a very interpretable and mathematically convenient notion of local sensitivity under appropriate smoothness conditions on the distorted posterior mean. 

An alternative definition of Bayesian local sensitivity based on likelihood distortion is to measure perturbations on the expectation of the posterior predictive distribution leading to different more involved local sensitivity measures. For the following definition, we consider the distorted posterior predictive distribution, that is $
\Bar{P}^{n}_{h_\alpha}(B) := \int_\Theta {P}_{\theta, h_\alpha}(B) \, d\Pi^n_{h_\alpha}(\theta)
$ for any Borel set $B \subseteq \mathbb{R}^d$.
\begin{definition}
\label{def:local-sensitivity-predictive}
Let $(X,\vartheta)$ be a Bayesian statistical model and let $h_\alpha$ be a distortion function parameterized by a constant $\alpha \in \mathbb{R}$. The (posterior predictive) Bayesian local sensitivity is defined as
    \begin{align}
    \label{eq:predictive_local_sens}
    \delta_{n,\mathrm{pred}}^{h_\alpha}:=\frac{\partial}{\partial \alpha} \E_{\Bar{P}^{n}_{h_\alpha}}[g(X)]\Big|_{\alpha=\alpha_0}
    \end{align}
    where $\alpha_0$ is such that $h_{\alpha_0}(y)=y$,
 $g:\mathbb{R}^d \to \mathbb{R}^s$ is a measurable function.
\end{definition}
Both Definitions~\ref{def:local_sensitivity}~and~\ref{def:local-sensitivity-predictive} could be extended to take into account local curvature; however, this goes beyond the scope of the paper and is left for future research.
Now, following Definition~\ref{def:local_sensitivity}, we provide a first result regarding the variation of posterior expectations as $\alpha$ changes; in particular, in the next theorem, we provide a simple expression for $\delta^{h_\alpha}_n$ without assuming any specific functional form for the distortion function.
We will require the following technical condition on $h_\alpha$.
\begin{assumption}
\label{assump:distortion}
    The function $\alpha \mapsto \log \circ \, h'_\alpha$ is $\Pi$-almost surely differentiable at $\alpha=\alpha_0$.
\end{assumption}

Note also that, in the next results, we assume that the sample is fixed, while this assumption will be removed in the asymptotic setting of Section~\ref{sec:asymptotic_behaviour}.
\begin{theorem}
\label{thm:lik_dist}
Let $h_\alpha$, with $\alpha \in \mathbb{R}$, be a distortion function composing $F_X$ and satisfying Assumption~\ref{assump:distortion}. Let $\delta_0>0$ and assume that, for some $g$ measurable and $G\in L^1(\Pi)$, 
$$
| \delta^{-1}{g(\theta)(\mathcal{L}_{h_{\alpha_0+\delta}}(\theta)-\mathcal{L}_{h_{\alpha_0}}(\theta))} | \leq G(\theta,\alpha_0 ), \quad |\delta|\leq \delta_0
$$
holds $\Pi$-almost surely, then
\begin{align}
\label{eq:general_cov_lik}
  \delta^{h_\alpha}_n=  \cov_{\Pi^n} \Big[g(\vartheta), \sum_{i=1}^n \frac{\partial}{\partial \alpha}  \log  h'_\alpha (F_X(x_i |  \vartheta))\Big|_{\alpha=\alpha_0}\Big].
\end{align}
\end{theorem}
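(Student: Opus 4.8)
The plan is to write the distorted posterior expectation as a ratio of two integrals against the \emph{undistorted} prior and to differentiate it by the quotient rule. Since the theorem distorts only the likelihood, set $N(\alpha):=\int_\Theta g(\theta)\mathcal{L}_{h_\alpha}(\theta)\,d\Pi(\theta)$ and $D(\alpha):=\int_\Theta \mathcal{L}_{h_\alpha}(\theta)\,d\Pi(\theta)$, so that $\E_{\Pi^n_{h_\alpha}}[g(\vartheta)]=N(\alpha)/D(\alpha)$ and
\[
\delta^{h_\alpha}_n=\frac{N'(\alpha_0)\,D(\alpha_0)-N(\alpha_0)\,D'(\alpha_0)}{D(\alpha_0)^2}.
\]
The key simplification is that at $\alpha=\alpha_0$ one has $h_{\alpha_0}(y)=y$, hence $h'_{\alpha_0}\equiv 1$ and $\mathcal{L}_{h_{\alpha_0}}=\mathcal{L}$ coincides with the undistorted likelihood; consequently $N(\alpha_0)/D(\alpha_0)=\E_{\Pi^n}[g(\vartheta)]$, and every ratio evaluated at $\alpha_0$ will be a posterior expectation under $\Pi^n$.

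Next I would compute $N'(\alpha_0)$ and $D'(\alpha_0)$ by differentiating under the integral sign. Writing $\log\mathcal{L}_{h_\alpha}(\theta)=\sum_{i=1}^n\log f_X(x_i\mid\theta)+\sum_{i=1}^n\log h'_\alpha(F_X(x_i\mid\theta))$ and using the logarithmic-derivative identity gives
\[
\frac{\partial}{\partial\alpha}\mathcal{L}_{h_\alpha}(\theta)=\mathcal{L}_{h_\alpha}(\theta)\sum_{i=1}^n\frac{\partial}{\partial\alpha}\log h'_\alpha(F_X(x_i\mid\theta)),
\]
which is well defined $\Pi$-almost surely by Assumption~\ref{assump:distortion}. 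Abbreviating the evaluation at $\alpha_0$ by $T(\theta):=\sum_{i=1}^n\frac{\partial}{\partial\alpha}\log h'_\alpha(F_X(x_i\mid\theta))\big|_{\alpha=\alpha_0}$ and using $\mathcal{L}_{h_{\alpha_0}}=\mathcal{L}$, one obtains $N'(\alpha_0)=\int_\Theta g(\theta)\mathcal{L}(\theta)T(\theta)\,d\Pi(\theta)$ and $D'(\alpha_0)=\int_\Theta \mathcal{L}(\theta)T(\theta)\,d\Pi(\theta)$.

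To finish, I would divide the numerator and denominator of the quotient-rule expression by $D(\alpha_0)^2$ and recognise each resulting ratio as an expectation under $\Pi^n$, obtaining
\[
\delta^{h_\alpha}_n=\E_{\Pi^n}[g(\vartheta)T(\vartheta)]-\E_{\Pi^n}[g(\vartheta)]\,\E_{\Pi^n}[T(\vartheta)]=\cov_{\Pi^n}[g(\vartheta),T(\vartheta)],
\]
which is exactly \eqref{eq:general_cov_lik} once $T$ is written out. Everything here is the elementary algebra of the quotient rule together with the identification of normalised integrals as posterior moments.

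The step requiring genuine care is the interchange of $\partial_\alpha$ and $\int$ that defines $N'(\alpha_0)$ and $D'(\alpha_0)$, and this is precisely what the domination hypothesis supplies. The stated bound $|\delta^{-1}g(\theta)(\mathcal{L}_{h_{\alpha_0+\delta}}(\theta)-\mathcal{L}_{h_{\alpha_0}}(\theta))|\le G(\theta,\alpha_0)$ with $G\in L^1(\Pi)$ dominates the difference quotients uniformly for $|\delta|\le\delta_0$, while Assumption~\ref{assump:distortion} furnishes their pointwise limit $g\,\mathcal{L}\,T$; the dominated convergence theorem then validates differentiation under the integral for $N$, and the analogous bound with $g\equiv 1$ does the same for $D$. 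The positivity $h'_\alpha>0$ near $\alpha_0$ (so that $\mathcal{L}_{h_\alpha}>0$) legitimises the logarithmic-derivative identity used above. I expect this domination/measure-theoretic justification to be the only nontrivial obstacle, the remainder being routine.
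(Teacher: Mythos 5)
Your proposal is correct and follows essentially the same route as the paper's own proof: write the distorted posterior expectation as a ratio of integrals against the undistorted prior, use the logarithmic-derivative identity together with $h'_{\alpha_0}\equiv 1$, justify differentiation under the integral sign via the domination hypothesis and dominated convergence, and identify the normalised integrals at $\alpha_0$ as expectations under $\Pi^n$ to land on the covariance. Your explicit remark that the denominator $D(\alpha)$ needs its own domination (the case $g\equiv 1$) is a small point of extra care that the paper glosses over, but it does not change the argument.
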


Remarkably, since the covariance is taken w.r.t.\ the non-distorted posterior $\Pi^n$, straightforward posterior approximations are available as we do not require to sample from the distorted posterior distribution. To interpret the result obtained, for the sake of simplicity, we may consider a sample composed of one observation $X_1$, then it is immediate to note that \eqref{eq:general_cov_lik} measures the linear relationship between each component of $g(\vartheta)$ and the (random) relative slope of the function $\alpha \mapsto h'_\alpha (F_X(x_i |  \vartheta))$ evaluated at $\alpha_0$. Furthermore, recall that $h'_\alpha (F_X(x_i |  \vartheta))$ is also the multiplicative component that appears in the distorted density given in \eqref{eq:cdf}.

It should be pointed out, throughout the paper, that since $g:\mathbb{R}^k \to \mathbb{R}^s$ 
it follows that $\delta^{h_\alpha}_n$ is a vector in $\mathbb{R}^s$. Therefore, the local sensitivity should be read as an element-wise covariance. It would be possible to summarise the information by considering its Euclidean norm, however, in our development, we will take the local sensitivity as a vector, because it is mathematically more convenient and, practically, it preserves information on the marginal local sensitivity for the components of $\vartheta$.
To clarify, if we consider $g(\vartheta)=\vartheta_j$ where $\vartheta_j$ is the $j$-th element of $\vartheta$, we obtain
\begin{align*}
    \delta^{h_\alpha}_n=\cov_{\Pi^n} \Big[\vartheta_j,\sum_{i=1}^n\frac{\partial}{\partial \alpha} \log  h'_\alpha (F_X(x_i |  \vartheta))\Big|_{\alpha=\alpha_0} \Big].
\end{align*}
 Now, we get the following bound by a straightforward application of the Cauchy-Schwarz inequality.
\begin{corollary}
\label{cor:inequality}
    Under the assumptions of Theorem~\ref{thm:lik_dist} it holds
    $$|\delta^{h_\alpha}_n|   \leq \var_{\Pi^n} [g(\vartheta)]^{1/2} \var_{\Pi^n}  \Big[\sum_{i=1}^n \frac{\partial}{\partial \alpha}  \log  h'_\alpha (F_X(x_i |  \vartheta))\Big|_{\alpha=\alpha_0} \Big]^{1/2}.$$
\end{corollary}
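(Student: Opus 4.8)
The plan is to invoke the Cauchy--Schwarz inequality for covariances componentwise. I would first abbreviate the scalar random variable appearing in the second slot of the covariance in Theorem~\ref{thm:lik_dist} by
\begin{align*}
Y := \sum_{i=1}^n \frac{\partial}{\partial \alpha}\log h'_\alpha(F_X(x_i \mid \vartheta))\Big|_{\alpha=\alpha_0},
\end{align*}
so that the theorem reads $\delta^{h_\alpha}_n = \cov_{\Pi^n}[g(\vartheta), Y]$, understood as a vector in $\mathbb{R}^s$ with entries $\cov_{\Pi^n}[g_j(\vartheta), Y]$ for $j=1,\dots,s$, since $g$ is $\mathbb{R}^s$-valued and $Y$ is scalar.

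Next I would recall the elementary fact that, for any two square-integrable scalar random variables $U,V$ on $(\Theta,\mathcal{B},\Pi^n)$, the centred versions $U-\E_{\Pi^n}[U]$ and $V-\E_{\Pi^n}[V]$ belong to $L^2(\Pi^n)$, so that applying the Cauchy--Schwarz inequality for the $L^2(\Pi^n)$ inner product gives $|\cov_{\Pi^n}[U,V]| \leq \var_{\Pi^n}[U]^{1/2}\var_{\Pi^n}[V]^{1/2}$. I would then instantiate this with $U=g_j(\vartheta)$ and $V=Y$ for each coordinate $j$. Because the second argument $Y$ is identical across all coordinates, the factor $\var_{\Pi^n}[Y]^{1/2}$ is common; collecting the $s$ scalar bounds into a vector and reading $|\cdot|$ and $\var_{\Pi^n}[g(\vartheta)]^{1/2}$ componentwise yields exactly the claimed inequality.

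There is essentially no obstacle here, as this is a direct corollary; the only point meriting a remark is integrability. The covariance in \eqref{eq:general_cov_lik} is well defined under the hypotheses of Theorem~\ref{thm:lik_dist}, and when both $\var_{\Pi^n}[g(\vartheta)]$ and $\var_{\Pi^n}[Y]$ are finite the Cauchy--Schwarz step applies immediately; should either variance be infinite, the right-hand side is $+\infty$ and the bound holds trivially, so no additional assumptions are needed beyond those already carried over from the theorem.
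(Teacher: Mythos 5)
Your proposal is correct and matches the paper's approach exactly: the paper states the corollary as ``a straightforward application of the Cauchy--Schwarz inequality'' to the covariance representation from Theorem~\ref{thm:lik_dist}, which is precisely your componentwise $L^2(\Pi^n)$ argument. The added remark on integrability is a sensible clarification but does not change the route.
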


\begin{remark}
    Consider $X_{1:n}$ being non i.i.d.\ r.v.s, and assume that the r.v.s $X_i |  X_{-i}$ for $i=1,\dots,n$, are independent. It suffices to note that the following factorization holds
    \begin{align*}
        f_{X_{h,1:n}}(x_{1:n} |  \theta)= \prod_{i=1}^n f_{X_i |  X_{-i}}(x_i |  x_{-i},\theta)h'(F_{X_i |  X_{-i}}(x_i |  x_{-i},\theta))
    \end{align*}
    giving rise to a proper standard likelihood function. This leads exactly to the same results for the local sensitivity, i.e.\ we obtain
    \begin{align*}
    \delta^{h_\alpha}_n  = \cov_{\Pi^n}\Big[g(\vartheta),  \sum_{i=1}^n \frac{\partial}{\partial \alpha}  \log  h'_\alpha (F_{X_i |  X_{-i}}(x_i |  x_{-i},\vartheta))\Big|_{\alpha=\alpha_0} \Big].
    \end{align*}
\end{remark}

Two further results that we state below involve distortion of the prior and double distortion of both prior and likelihood. This leads us to the following Theorems.
\begin{theorem}
\label{thm:prior_distortion}
Let $h_\alpha$, with $\alpha \in \mathbb{R}$, be a distortion function composing $F_\vartheta$ and satisfying Assumption~\ref{assump:distortion}. 
Let $\delta_0>0$ and assume that, for some $g$ measurable and $G\in L^1(\Pi)$,
$$ |\delta^{-1} g(\theta)\mathcal{L}(\theta)(h'_{\alpha_0+\delta}(F_\vartheta(\theta))-h'_{\alpha_0}(F_\vartheta(\theta))) | \leq G(\theta,\alpha_0), \quad |\delta|\leq \delta_0$$ holds $\Pi$-almost surely, then
\begin{align}
\label{eq:general_cov_prior}
   \delta^{h_\alpha}_n=\cov_{\Pi^n} \Big[g(\vartheta),  \frac{\partial}{\partial \alpha}  \log  h'_\alpha (F_\vartheta(\vartheta))\Big|_{\alpha=\alpha_0} \Big].
\end{align}
\end{theorem}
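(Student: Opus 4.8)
The plan is to differentiate the prior-distorted posterior expectation, written as a ratio of two integrals, and to recognise the resulting expression as a posterior covariance; the argument runs exactly parallel to the proof of Theorem~\ref{thm:lik_dist}, the only structural change being that the distortion factor $h'_\alpha(F_\vartheta(\cdot))$ now multiplies the prior rather than the likelihood (and therefore appears as a single factor, with no product over the sample). Using the distorted prior density $f_\vartheta(\theta)\,h'_\alpha(F_\vartheta(\theta))$ read off from \eqref{eq:cdf}, I would first write $\E_{\Pi^n_{h_\alpha}}[g(\vartheta)] = N(\alpha)/D(\alpha)$ with $N(\alpha)=\int_\Theta g(\theta)\mathcal{L}(\theta)h'_\alpha(F_\vartheta(\theta))\,d\Pi(\theta)$ and $D(\alpha)=\int_\Theta \mathcal{L}(\theta)h'_\alpha(F_\vartheta(\theta))\,d\Pi(\theta)$, and then apply the quotient rule $\partial_\alpha(N/D)=N'/D-(N/D)(D'/D)$ evaluated at $\alpha=\alpha_0$.

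The central step is to differentiate $N$ and $D$ under the integral sign. This is precisely what the domination hypothesis buys: the $\Pi$-almost sure bound on the difference quotient $\delta^{-1}g(\theta)\mathcal{L}(\theta)\big(h'_{\alpha_0+\delta}-h'_{\alpha_0}\big)(F_\vartheta(\theta))$ by $G\in L^1(\Pi)$ for $|\delta|\le\delta_0$, combined with Assumption~\ref{assump:distortion} (differentiability of $\alpha\mapsto\log h'_\alpha$ at $\alpha_0$), lets dominated convergence yield $N'(\alpha_0)=\int_\Theta g(\theta)\mathcal{L}(\theta)\,\partial_\alpha h'_\alpha(F_\vartheta(\theta))\big|_{\alpha_0}\,d\Pi(\theta)$; the denominator $D$ is the same computation in the special case $g\equiv 1$.

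I would then simplify using the elementary identity $\partial_\alpha h'_\alpha = h'_\alpha\,\partial_\alpha\log h'_\alpha$ together with the normalisation $h_{\alpha_0}(y)=y$, which forces $h'_{\alpha_0}\equiv 1$. Hence at $\alpha_0$ the distorted posterior collapses to $\Pi^n$, the constant $m:=D(\alpha_0)=\int_\Theta\mathcal{L}\,d\Pi$ is the ordinary marginal likelihood, and writing $\psi(\theta):=\partial_\alpha\log h'_\alpha(F_\vartheta(\theta))\big|_{\alpha_0}$ one obtains $N(\alpha_0)=m\,\E_{\Pi^n}[g(\vartheta)]$, $N'(\alpha_0)=m\,\E_{\Pi^n}[g(\vartheta)\psi(\vartheta)]$ and $D'(\alpha_0)=m\,\E_{\Pi^n}[\psi(\vartheta)]$. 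Substituting these into the quotient rule cancels $m$ and leaves $\delta^{h_\alpha}_n=\E_{\Pi^n}[g(\vartheta)\psi(\vartheta)]-\E_{\Pi^n}[g(\vartheta)]\,\E_{\Pi^n}[\psi(\vartheta)]=\cov_{\Pi^n}[g(\vartheta),\psi(\vartheta)]$, which is \eqref{eq:general_cov_prior}.

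I expect the only genuine obstacle to be the interchange of differentiation and integration, which is exactly why the $L^1(\Pi)$ domination is imposed as a hypothesis; the remaining work is the bookkeeping ensuring that evaluation at $\alpha_0$ annihilates the factor $h'_{\alpha_0}\equiv 1$, so that all surviving integrals are taken against the undistorted posterior $\Pi^n$ and assemble into a single covariance.
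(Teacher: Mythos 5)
Your proposal is correct and follows essentially the same route as the paper's proof: write $\E_{\Pi^n_{h_\alpha}}[g(\vartheta)]$ as a ratio of integrals against $d\Pi_{h_\alpha}(\theta)=h'_\alpha(F_\vartheta(\theta))\,d\Pi(\theta)$, differentiate under the integral sign via dominated convergence (justified by the stated $L^1(\Pi)$ bound), apply the quotient rule, and use $h'_{\alpha_0}\equiv 1$ so that everything collapses to expectations under the undistorted posterior and assembles into the covariance \eqref{eq:general_cov_prior}. Your write-up is in fact somewhat more explicit than the paper's about the identity $\partial_\alpha h'_\alpha = h'_\alpha\,\partial_\alpha \log h'_\alpha$ and the cancellation of the marginal likelihood, but the argument is the same.
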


To conclude the section, we state the result for the local sensitivity measure based on a double distortion of both the prior and the likelihood.

\begin{theorem}
\label{thm:double_distortion}
Let $h_\alpha$, with $\alpha \in \mathbb{R}$, be a distortion function composing both $F_\vartheta$ and $F_X$ and satisfying Assumption~\ref{assump:distortion}. 
Let $\delta_0>0$ and assume that, for some $g$ measurable and $G\in L^1(\Pi)$,
$$ | \delta^{-1}g(\theta)(\mathcal{L}_{h_{\alpha_0+\delta}}(\theta)h'_{\alpha_0+\delta}(F_\vartheta(\theta))-\mathcal{L}_{h_{\alpha_0}}(\theta)h'_{\alpha_0}(F_\vartheta(\theta))) | \leq G(\theta,\alpha_0), \quad |\delta|\leq \delta_0$$ holds $\Pi$-almost surely, then
\begin{align}
\label{eq:general_cov_double}
  \delta^{h_\alpha}_n=\cov_{\Pi^n}\Big[g(\vartheta),  \frac{\partial}{\partial \alpha}  \Big(\log  h'_\alpha (F_\vartheta(\vartheta))+ \sum_{i=1}^n \log  h'_\alpha (F_X(x_i |  \vartheta)) \Big)\Big|_{\alpha=\alpha_0} \Big].
\end{align}
\end{theorem}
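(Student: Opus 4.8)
The plan is to reduce this double-distortion theorem to the same quotient-rule computation underlying Theorems~\ref{thm:lik_dist} and~\ref{thm:prior_distortion}, the only new ingredient being that the two log-distortion contributions---one from the likelihood, one from the prior---enter additively. First I would rewrite the distorted posterior expectation as a ratio of integrals against the \emph{base} prior $\Pi$. Since the distorted prior $\Pi_{h_\alpha}$ has Radon--Nikodym density $h'_\alpha(F_\vartheta(\theta))$ with respect to $\Pi$ (the density formula~\eqref{eq:cdf} applied to $F_\vartheta$), I absorb this factor together with the distorted likelihood into a single combined weight $W_\alpha(\theta):=\mathcal{L}_{h_\alpha}(\theta)\,h'_\alpha(F_\vartheta(\theta))$, so that
\begin{align*}
\E_{\Pi^n_{h_\alpha}}[g(\vartheta)]=\frac{N(\alpha)}{D(\alpha)},\qquad N(\alpha):=\int_\Theta g(\theta)W_\alpha(\theta)\,d\Pi(\theta),\quad D(\alpha):=\int_\Theta W_\alpha(\theta)\,d\Pi(\theta).
\end{align*}
Because $h_{\alpha_0}(y)=y$ forces $h'_{\alpha_0}\equiv 1$, at the base point $W_{\alpha_0}=\mathcal{L}$ collapses to the undistorted likelihood, so that $N(\alpha_0)/D(\alpha_0)=\E_{\Pi^n}[g(\vartheta)]$.

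Next I would differentiate the ratio via the quotient rule, passing the derivative inside both integrals. This interchange is precisely what the stated domination secures: the bound $|\delta^{-1}g(\theta)(W_{\alpha_0+\delta}(\theta)-W_{\alpha_0}(\theta))|\le G(\theta,\alpha_0)$ with $G\in L^1(\Pi)$ controls the difference quotient of $N$, and the denominator $D$ is handled by the same domination technique with $g\equiv 1$; dominated convergence then gives $N'(\alpha_0)=\int_\Theta g\,\partial_\alpha W_\alpha|_{\alpha_0}\,d\Pi$ and its analogue for $D'$. To evaluate $\partial_\alpha W_\alpha$ I would differentiate logarithmically: writing $\log W_\alpha(\theta)=\log\mathcal{L}(\theta)+\sum_{i=1}^n\log h'_\alpha(F_X(x_i\mid\theta))+\log h'_\alpha(F_\vartheta(\theta))$, Assumption~\ref{assump:distortion} guarantees that the derivative $S_{\alpha_0}(\theta):=\partial_\alpha\big(\sum_{i=1}^n\log h'_\alpha(F_X(x_i\mid\theta))+\log h'_\alpha(F_\vartheta(\theta))\big)\big|_{\alpha_0}$ exists $\Pi$-a.s., with $\partial_\alpha W_\alpha=W_\alpha S_\alpha$.

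Finally I would evaluate at $\alpha_0$. Using $W_{\alpha_0}=\mathcal{L}$, each ratio in the quotient rule becomes an undistorted posterior expectation, namely $N'(\alpha_0)/D(\alpha_0)=\E_{\Pi^n}[g(\vartheta)S_{\alpha_0}(\vartheta)]$, $D'(\alpha_0)/D(\alpha_0)=\E_{\Pi^n}[S_{\alpha_0}(\vartheta)]$, and $N(\alpha_0)/D(\alpha_0)=\E_{\Pi^n}[g(\vartheta)]$, so the quotient-rule expression collapses to
\begin{align*}
\delta^{h_\alpha}_n=\E_{\Pi^n}[g(\vartheta)S_{\alpha_0}(\vartheta)]-\E_{\Pi^n}[g(\vartheta)]\,\E_{\Pi^n}[S_{\alpha_0}(\vartheta)]=\cov_{\Pi^n}[g(\vartheta),S_{\alpha_0}(\vartheta)],
\end{align*}
which is exactly~\eqref{eq:general_cov_double}. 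The one genuinely analytic step---and the only place requiring care---is the justification of differentiation under the integral sign; everything else is the elementary algebra by which the quotient rule manufactures a covariance, with the double distortion contributing the prior term $\partial_\alpha\log h'_\alpha(F_\vartheta(\vartheta))$ additively alongside the likelihood terms exactly as in the single-distortion results.
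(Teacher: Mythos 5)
Your proposal is correct and follows essentially the same route as the paper's proof: rewrite the distorted posterior expectation as a ratio of integrals against the base prior with the combined weight $\mathcal{L}_{h_\alpha}(\theta)h'_\alpha(F_\vartheta(\theta))$, justify differentiation under the integral sign via the stated domination condition and dominated convergence, and apply the quotient rule at $\alpha_0$ (where the weight collapses to the undistorted likelihood) so that the expression becomes a covariance under $\Pi^n$. Your explicit remark that the denominator requires the same domination with $g\equiv 1$ is a small point of added care that the paper leaves implicit.
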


\begin{remark}
\label{rmk:curvature}
   Definition~\ref{def:local_sensitivity} does not take into account the local curvature of the posterior, which would require the calculation of the second-order derivative. Nevertheless, Theorems~\ref{thm:lik_dist},~\ref{thm:prior_distortion}~and~\ref{thm:double_distortion} can be extended to account for local curvature under slightly modified conditions. However, the introduction of second-order properties goes beyond the scope of the paper, and such extensions are left for future research.
\end{remark}

\section{Local sensitivity measures}
\label{sec:classes}

Lemma 1 in \cite{distorsion_priors2016} shows that selecting a couple of convex and concave distortion functions induces a proper neighbourhood of the original distribution in terms of the likelihood ratio order. The authors refer to this type of neighbourhood as the distorted band class. Moreover, in Theorem~8 the authors also show that the distorted band class is a subset of the concentration function class that, in turn, is shown to be a topological neighbourhood by \cite{fortini1994defining}. In the following, we specify the results of Section~\ref{sec:dist} for some selected classes of distortion functions.
Note that any distortion function satisfying Assumption~\ref{assump:distortion} can be chosen, however, the elicitation of a specific distortion function should be driven by the specific robustness diagnostic that is of interest in a certain modelling setting.

\subsection{Local sensitivity under power distortion}

Among the available distortion functions, power distortion provides a simple and mathematically convenient one.
Moreover, its practical relevance motivates us to illustrate the theory in full detail.
\begin{definition}[Power distortion]
    Let $\alpha>1 $, a power distortion is induced by the functions $h_\alpha:[0,1]\to [0,1]$ and $\Tilde{h}_\alpha:[0,1]\to [0,1]$ given by
    \begin{align}
        \label{eq:pow_dist}
        y\mapsto h_\alpha(y)=y^\alpha, \quad \quad  y\mapsto\Tilde{h}_\alpha(y)=1-(1-y)^\alpha.
    \end{align}
\end{definition}
Note that $h_\alpha$ is strictly convex on $[0,1]$ and $\Tilde{h}_\alpha$ is strictly concave on $[0,1]$, thus, power distortion induces a proper neighbourhood in terms of likelihood ratio order. This is particularly useful in the context of Bayesian global robustness where we are interested in computing the so-called range measure. However, it is also important in the context of Bayesian local robustness, since it allows us to measure the impact of infinitesimal perturbations towards likelihood ratio neighbourhoods.
It is also interesting to note that when $\alpha:=n \in \mathbb{N}$, the power distortion functions in \eqref{eq:pow_dist} lead to the distribution of the order statistics $X_{(n)}$ and $X_{(1)}$, respectively. Thus, from a practical modelling perspective, it provides a manageable expression for detecting sensitivity to small perturbations of the tails and for measuring robustness to outliers.

Following Definition~\ref{def:local_sensitivity}, we adopt power distortion to perform a Bayesian local sensitivity analysis. In the following, we show that adopting a power distortion leads to a local sensitivity measure that is very easy to compute in practice.
\begin{proposition}
\label{prop:power_distorsion}
Let $h_\alpha(y)=y^\alpha $ with $\alpha>1$, under the assumptions of Theorem~\ref{thm:lik_dist} it holds
\begin{align}
\label{eq:power_distortion_lik}
   \delta^{h_\alpha}_n = \cov_{\Pi^n}\Big[g(\vartheta) ,\sum_{i=1}^n \log F_X(x_i |  \vartheta)\Big].
\end{align}
\end{proposition}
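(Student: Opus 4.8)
The plan is to specialise Theorem~\ref{thm:lik_dist} to the power family and to carry out the single differentiation it leaves implicit. Since the hypotheses of Theorem~\ref{thm:lik_dist} are assumed to hold, equation~\eqref{eq:general_cov_lik} is available, and the whole task reduces to evaluating the integrand $\frac{\partial}{\partial\alpha}\log h'_\alpha(F_X(x_i | \vartheta))\big|_{\alpha=\alpha_0}$ for $h_\alpha(y)=y^\alpha$. First I would pin down $\alpha_0$: the identity distortion $h_{\alpha_0}(y)=y$ forces $y^{\alpha_0}=y$ for all $y\in[0,1]$, hence $\alpha_0=1$. Note that $h_\alpha$ is a genuine distortion for every $\alpha>0$, so $\alpha_0=1$ is an interior point and the derivative in $\alpha$ is a bona fide two-sided derivative (a one-sided reading on $\alpha>1$ gives the same value).

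Next I would compute the derivative explicitly. Differentiating $h_\alpha(y)=y^\alpha$ gives $h'_\alpha(y)=\alpha y^{\alpha-1}$, so that $\log h'_\alpha(y)=\log\alpha+(\alpha-1)\log y$. This is plainly differentiable in $\alpha$ on $(0,\infty)$ whenever $y>0$, which also verifies Assumption~\ref{assump:distortion} for the points $y=F_X(x_i | \vartheta)$ that carry positive mass under $\Pi$. Differentiating and evaluating at $\alpha_0=1$ yields
\begin{align*}
\frac{\partial}{\partial\alpha}\log h'_\alpha(y)\Big|_{\alpha=1}=\Big(\frac{1}{\alpha}+\log y\Big)\Big|_{\alpha=1}=1+\log y.
\end{align*}
Substituting $y=F_X(x_i | \vartheta)$ and summing over the sample gives the second covariance argument as $n+\sum_{i=1}^n\log F_X(x_i | \vartheta)$.

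Finally, I would use the translation invariance of the covariance in its second slot: the additive constant $n$, which does not depend on $\vartheta$, drops out, leaving
\begin{align*}
\delta^{h_\alpha}_n=\cov_{\Pi^n}\Big[g(\vartheta),\,n+\sum_{i=1}^n\log F_X(x_i | \vartheta)\Big]=\cov_{\Pi^n}\Big[g(\vartheta),\,\sum_{i=1}^n\log F_X(x_i | \vartheta)\Big],
\end{align*}
which is the claimed identity. There is no substantial obstacle here: once Theorem~\ref{thm:lik_dist} is invoked, the argument is a one-line differentiation followed by a constant-shift cleanup. The only point requiring a word of care is the finiteness of $\log F_X(x_i | \vartheta)$, i.e.\ that $F_X(x_i | \vartheta)>0$ holds $\Pi$-almost surely; this is exactly the content of the $\Pi$-a.s.\ differentiability demanded by Assumption~\ref{assump:distortion} and is inherited from the standing hypotheses of Theorem~\ref{thm:lik_dist}.
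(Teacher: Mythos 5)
Your proposal is correct and follows essentially the same route as the paper: the paper's proof likewise reduces everything to Theorem~\ref{thm:lik_dist}, computes $\frac{\partial}{\partial\alpha}\log h'_\alpha(y)=\frac{1}{\alpha}+\log y$ at $\alpha_0=1$, obtains the second covariance argument $n+\sum_{i=1}^n\log F_X(x_i|\vartheta)$, and drops the constant $n$ (the paper additionally spells out the quotient-of-integrals differentiation explicitly, but only as an illustration). Your remark that Assumption~\ref{assump:distortion} forces $F_X(x_i|\vartheta)\in(0,1]$ $\Pi$-a.s.\ matches the paper's own observation.
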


Note that the local sensitivity $\delta^{h_\alpha}_n$ under power distortion measures the linear relationship between the r.v.s $g(\vartheta)$ and $\sum_{i=1}^n \log F_X(x_i |  \vartheta)$ and, as a consequence, a stronger linear relationship implies a likelihood choice which is less robust to small modifications of the tails.
It is worth noting that under the same assumptions, owing to Corollary~\ref{cor:inequality}, we obtain the following bound
\begin{align}
\label{eq:bound-CS-distr}
|\delta^{h_\alpha}_n |\leq \var_{\Pi^n}[g(\vartheta)]^{1/2}  \var_{\Pi^n} \Big[\sum_{i=1}^n\log F_X(x_i |  \vartheta)\Big]^{1/2},
\end{align}
from which it immediately follows that \eqref{eq:power_distortion_lik} exists and it is bounded when the r.v.s $g(\vartheta)$ and $\sum_{i=1}^n \log F_X(x_i |  \vartheta)$ have finite second moment. Both conditions are generally fulfilled giving sharp bounds when $F_X$ is such that $\Pi^n(\theta : F_X(x_i|\theta)>0)=1$ for all $i=1,\dots,n$. 

Proposition~\ref{prop:power_distorsion} suggests the introduction of a simple local sensitivity index that measures the robustness of the model to infinitesimal perturbations of the tails, given the realisation of a random sample $X_{1:n}$. 
It is important to notice that in applied settings it might be useful to normalize $\delta^{h_\alpha}_n$ for its upper bound obtaining
\begin{align}
\label{eq:delta_lik_normalized}
    \overline{\delta}^{h_\alpha}_n:= \delta^{h_\alpha}_n\var_{\Pi^n}[g(\vartheta)]^{-1/2}  \var_{\Pi^n} \Big[\sum_{i=1}^n\log F_X(x_i |  \vartheta)\Big]^{-1/2}
\end{align}
which ranges in $[-1,1]$.

Although not the main purpose of the paper, it is worth mentioning that a result similar to Proposition~\ref{prop:power_distorsion} can be obtained for local sensitivity w.r.t.\ the prior as stated in Theorem~\ref{thm:prior_distortion}. Indeed, thanks to \eqref{eq:general_cov_prior} we obtain
\begin{align}
\label{eq:delta_prior}
     \delta^{h_\alpha}_n= \cov_{\Pi^n}[g(\vartheta), \log F_\vartheta(\vartheta)]
\end{align}
where $F_\vartheta$ is the distribution function of the prior distribution.
In addition, following Theorem~\ref{thm:double_distortion} we could obtain a sensitivity measure based on a double distortion of both the prior and the likelihood. Hence, thanks to \eqref{eq:general_cov_double}, we obtain the following measure of double (prior-likelihood) local sensitivity
\begin{align}
\label{eq:delta_double}
   \delta^{h_\alpha}_n= \cov_{\Pi^n}\Big[g(\vartheta), \log F_\vartheta(\vartheta)+\sum_{i=1}^n \log F_X(x_i |  \vartheta)\Big].
\end{align}
For simplicity of exposition, we continue our theoretical development for the sensitivity measures \eqref{eq:power_distortion_lik} and \eqref{eq:delta_lik_normalized}, i.e.\ based on a single distortion of the likelihood. The reasons are both notation simplicity and the fact that the main focus of the paper is to provide measures of local robustness w.r.t.\ the likelihood as to our knowledge there are no well-established proposals in the literature. However, the entire theoretical development applies identically to both \eqref{eq:delta_prior} and \eqref{eq:delta_double}.
Now we obtain another version of the local sensitivity based on $\Tilde{h}_\alpha$.
\begin{proposition}
\label{prop:survival_power_distorsion}
    Let $\Tilde h_\alpha(y)=1-(1-y)^\alpha$ with $\alpha>1$, under the assumptions of Theorem~\ref{thm:lik_dist} it holds
  \begin{align*}
  \delta^{\Tilde h_\alpha}_n  = \cov_{\Pi^n}\Big[g(\vartheta) ,\sum_{i=1}^n \log S_X(x_i |  \vartheta)\Big].
    \end{align*}
\end{proposition}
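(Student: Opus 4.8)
The plan is to read the result straight off Theorem~\ref{thm:lik_dist}, which already reduces the computation of $\delta^{\Tilde h_\alpha}_n$ to evaluating the $\alpha$-derivative of $\log \Tilde h'_\alpha$ at the point $\alpha_0$ where the distortion collapses to the identity. So the only genuine work is a one-line differentiation, exactly mirroring the argument behind Proposition~\ref{prop:power_distorsion}.

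First I would identify $\alpha_0$: since $\Tilde h_1(y)=1-(1-y)^1=y$, the undistorted model corresponds to $\alpha_0=1$. Next I would differentiate the distortion, obtaining $\Tilde h'_\alpha(y)=\alpha(1-y)^{\alpha-1}$. Evaluating at $y=F_X(x_i\mid\vartheta)$ and using the identity $1-F_X=S_X$, this becomes $\Tilde h'_\alpha(F_X(x_i\mid\vartheta))=\alpha\, S_X(x_i\mid\vartheta)^{\alpha-1}$, so that
\[
\log \Tilde h'_\alpha(F_X(x_i\mid\vartheta))=\log\alpha+(\alpha-1)\log S_X(x_i\mid\vartheta).
\]
Differentiating in $\alpha$ and setting $\alpha=\alpha_0=1$ gives $\tfrac{1}{\alpha}+\log S_X(x_i\mid\vartheta)$ evaluated at $1$, i.e.\ $1+\log S_X(x_i\mid\vartheta)$.

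Then I would substitute this into the covariance expression \eqref{eq:general_cov_lik}, yielding $\cov_{\Pi^n}\big[g(\vartheta),\sum_{i=1}^n(1+\log S_X(x_i\mid\vartheta))\big]$. Since $\sum_{i=1}^n 1 = n$ is constant in $\vartheta$, it is annihilated by the covariance operator, and the claimed formula follows at once. The only points requiring a moment's care, rather than any real obstacle, are checking that $\Tilde h_\alpha$ meets Assumption~\ref{assump:distortion}—which holds because $\alpha\mapsto\log\alpha+(\alpha-1)\log S_X(x_i\mid\vartheta)$ is differentiable at $\alpha_0=1$ whenever $S_X(x_i\mid\vartheta)>0$—and noting that the constant term $1/\alpha_0$ drops out of the covariance. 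Everything else, including the dominating-function condition, is inherited verbatim from the hypotheses of Theorem~\ref{thm:lik_dist}.
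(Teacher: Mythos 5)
Your proposal is correct and follows essentially the same route as the paper: compute $\frac{\partial}{\partial\alpha}\log\Tilde h'_\alpha(F_X(x_i\mid\vartheta))\big|_{\alpha=1}=1+\log S_X(x_i\mid\vartheta)$, verify Assumption~\ref{assump:distortion}, and apply Theorem~\ref{thm:lik_dist}, with the constant term vanishing under the covariance. The paper additionally spells out the analogue of the explicit integral manipulation from Proposition~\ref{prop:power_distorsion}, but that is expository rather than a substantive difference.
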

Under the same assumptions, Corollary~\ref{cor:inequality} yields the following bound 
\begin{align}
\label{eq:bound-CS-surv}
   |\delta^{\Tilde h_\alpha}_n  |  \leq \var_{\Pi^n}[g(\vartheta)]^{1/2} \var_{\Pi^n} \Big[\sum_{i=1}^n\log S_X(x_i |  \vartheta)\Big]^{1/2},
\end{align}
from which we may obtain the normalized version
\begin{align*}
    \overline{\delta}^{\Tilde h_\alpha}_n:= \delta^{h_\alpha}_n\var_{\Pi^n}[g(\vartheta)]^{-1/2}  \var_{\Pi^n} \Big[\sum_{i=1}^n\log S_X(x_i |  \vartheta)\Big]^{-1/2},
\end{align*}
that ranges in $[-1,1]$.
\begin{example}
    \label{ex:prop}
   Let us consider the gamma-exponential setting of Example~\ref{ex:exponential-gamma1} with $g(\vartheta)=\vartheta$. We know that for $\Tilde{h}_\alpha(y)= 1-(1-y)^\alpha$ composing $F_X$ it follows 
   $$
   \E_{\Pi_{\Tilde h_\alpha}^n}[\vartheta]= \frac{a+n}{b+\alpha \sum_{i=1}^n x_i}.
   $$ 
   Therefore, we have
\begin{equation}
\label{eq:analytical-verification}
        \frac{\partial}{\partial \alpha}\E_{\Pi_{\Tilde h_\alpha}^n}[\vartheta]\Big|_{\alpha=1} = -\frac{(a+n)\sum_{i=1}^n x_i}{(b+ \sum_{i}^n x_i)^2} = -\var_{\Pi^n}[\vartheta]\sum_{i=1}^nx_i.
\end{equation}
Now we verify that Proposition~\ref{prop:survival_power_distorsion} gives the same results, thus, we first check the required conditions. First, note that Assumption~\ref{assump:distortion} is satisfied provided $S_X(x_i|\theta)>0$ $\Pi$-almost surely for all $i=1,\dots,n$. Moreover, let $ \delta_0>0$, by triangle inequality, for all $\delta$ such that $|\delta|\leq \delta_0$ and for all $\theta >0$ we have
    \begin{align*}
         |  \delta^{-1} g(\theta)(\mathcal{L}_{\Tilde h_{1+\delta}}(\theta)-\mathcal{L}_{\Tilde h_1}(\theta))|&
     \leq |\delta^{-1}\theta^{n+1}(1+\delta)^n|+ |\delta^{-1}\theta^{n+1}|,
    \end{align*}
    which implies that the conditions of Theorem~\ref{thm:lik_dist} are satisfied since the Gamma distribution has finite moments of any order.
 Consequently, Proposition~\ref{prop:survival_power_distorsion} applies and we obtain
    \begin{align*}
       \delta^{\Tilde h_\alpha}_n&=\E_{\Pi^n}\Big[\vartheta \sum_{i=1}^n-x_i\vartheta\Big]-\E_{\Pi^n}[\vartheta]\E_{\Pi^n}\Big[-\sum_{i=1}^n x_i\vartheta\Big]=-\var_{\Pi^n}[\vartheta]\sum_{i=1}^nx_i,
    \end{align*}
   matching the results of display \eqref{eq:analytical-verification}. Moreover, it is worth noting that
    \begin{align*}
        \var_{\Pi^n}\Big[\sum_{i=1}^n\log S_X(x_i |  \vartheta)\Big]= \var_{\Pi^n}\Big[-\sum_{i=1}^n x_i\vartheta\Big]= \var_{\Pi^n}[\vartheta] \Big(\sum_{i=1}^nx_i\Big)^2.
    \end{align*}
    Hence,
    \begin{align*}
    \var_{\Pi^n}[\vartheta]^{1/2}\var_{\Pi^n}\Big[\sum_{i=1}^n\log S_X(x_i |  \vartheta)\Big]^{1/2} = \var_{\Pi^n}[\vartheta] \sum_{i=1}^nx_i,
    \end{align*}
and, as a consequence, the bound in \eqref{eq:bound-CS-surv} is sharp.
    \end{example}
An interesting extension to the posterior predictive local sensitivity defined in \eqref{eq:predictive_local_sens} can be easily obtained under power distortion; we state this result only for likelihood distortion through $h_\alpha$ defined in \eqref{eq:pow_dist}.
\begin{proposition}
\label{prop:predictve_local_sensitivity}
    Let $h_\alpha(y)= y^\alpha$, with $\alpha>1$, be a distortion function composing $F_X$. Let $\delta_0>0$ and assume that, for $g$ measurable and $G\in L^1(\lambda \otimes \Pi)$,
   $$
          |\delta^{-1}g(x)( f_{X_{h_{1+\delta}}}(x|\theta)\mathcal{L}_{h_{1+\delta}}(\theta) - f_{X_{h_1}}(x|\theta)\mathcal{L}_{h_1}(\theta))|\leq G(x,\theta), \quad |\delta|\leq \delta_0
   $$
   holds $\lambda \otimes \Pi$-almost surely, then
\begin{equation}
\begin{aligned}
\label{eq:predictive_local_sens_power}
       \delta_{n,\mathrm{pred}}^{h_\alpha}=\int_{\mathbb{R}^d} g(x) \E_{\Pi^n}\Big[ f_X(x|\vartheta)\Big(1+n+\log F_X(x|\vartheta)+ \sum_{i=1}^n \log F_X(x_i|\vartheta)-R_n \Big)\Big] d\lambda(x)
\end{aligned}
\end{equation}
    where
    \begin{align*}
    R_n= \frac{\frac{\partial}{\partial \alpha}  f_{X_{h_\alpha,1:n}}(x_{1:n})\Big|_{\alpha=1}}{ f_{X_{1:n}}(x_{1:n})}= 
    \frac{\int_\Theta\mathcal{L}(\theta) \sum_{i=1}^n \log F_X(x_i |  \theta)d\Pi(\theta)}{\int_\Theta\mathcal{L}(\theta) d\Pi(\theta)}.
\end{align*}
\end{proposition}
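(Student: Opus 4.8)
The plan is to express the distorted posterior predictive expectation as a ratio of two $\alpha$-dependent integrals and then differentiate at $\alpha=1$ by the quotient rule, so that everything reduces to the elementary logarithmic derivative of the power-distortion factor. Writing the normalising constant of the distorted posterior as $D(\alpha):=\int_\Theta \mathcal{L}_{h_\alpha}(\theta)\,d\Pi(\theta)$ and using Fubini together with $d\Pi^n_{h_\alpha}(\theta)=\mathcal{L}_{h_\alpha}(\theta)\,d\Pi(\theta)/D(\alpha)$, I would first record that
\begin{align*}
\E_{\Bar{P}^{n}_{h_\alpha}}[g(X)]=\frac{N(\alpha)}{D(\alpha)},\qquad N(\alpha):=\int_{\mathbb{R}^d} g(x)\int_\Theta f_{X_{h_\alpha}}(x|\theta)\,\mathcal{L}_{h_\alpha}(\theta)\,d\Pi(\theta)\,d\lambda(x).
\end{align*}

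The second step is to differentiate $N$ and $D$ under the integral sign. The stated hypothesis, with dominating function $G\in L^1(\lambda\otimes\Pi)$, is precisely the dominated-convergence condition that lets $\partial_\alpha$ pass through both the $\lambda$-integral in $x$ and the $\Pi$-integral in $\theta$, and an analogous but simpler bound justifies differentiating $D$. For the power distortion $h_\alpha(y)=y^\alpha$ one has $h'_\alpha(y)=\alpha y^{\alpha-1}$, hence $\log h'_\alpha(y)=\log\alpha+(\alpha-1)\log y$ and $\partial_\alpha\log h'_\alpha(y)\big|_{\alpha=1}=1+\log y$. Applying logarithmic differentiation to the product $f_{X_{h_\alpha}}(x|\theta)\,\mathcal{L}_{h_\alpha}(\theta)$ and evaluating at $\alpha=1$ therefore contributes the factor $(1+\log F_X(x|\theta))+(n+\sum_{i=1}^n\log F_X(x_i|\theta))$, i.e.\ exactly the bracket $1+n+\log F_X(x|\theta)+\sum_{i=1}^n\log F_X(x_i|\theta)$ appearing in the statement. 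The same computation applied to $\mathcal{L}_{h_\alpha}(\theta)$ alone gives $D'(1)/D(1)=R_n$, which matches the defining expression for $R_n$.

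Finally, I would assemble the quotient rule $\partial_\alpha (N/D)\big|_{\alpha=1}=N'(1)/D(1)-(N(1)/D(1))\,(D'(1)/D(1))$, recognise $\mathcal{L}(\theta)\,d\Pi(\theta)/D(1)=d\Pi^n(\theta)$ to rewrite each inner $\theta$-integral as a posterior expectation $\E_{\Pi^n}[\cdot]$, and collect the two terms into the single bracketed expression of the proposition, with the $-R_n$ arising from the subtracted term of the quotient rule. The main obstacle is the rigorous justification of differentiation under the double integral sign: one must control the difference quotient uniformly in $x$ and $\theta$, which is exactly what the $L^1(\lambda\otimes\Pi)$ domination secures, after which Fubini's theorem legitimises interchanging the order of integration. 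Everything else is the bookkeeping of the quotient rule together with the one-line derivative of the power-distortion weight.
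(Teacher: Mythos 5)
Your proposal is correct and follows essentially the same route as the paper's proof: the paper likewise interchanges $\partial_\alpha$ with the double integral via dominated convergence, applies the quotient rule to the power-distortion weights over the normalising constant $f_{X_{h_\alpha,1:n}}(x_{1:n})=D(\alpha)$, and uses $\partial_\alpha\log h'_\alpha(y)\vert_{\alpha=1}=1+\log y$ to produce the bracket and the $-R_n=-D'(1)/D(1)$ term. Writing the quotient rule at the level of $N(\alpha)/D(\alpha)$ rather than pointwise inside the integrand is only a cosmetic reorganisation of the same computation.
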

It is not difficult to note that $R_n$ is an evidence ratio between the slope of the distorted evidence (marginal distorted likelihood) at $\alpha=1$ and the original evidence.
It is immediately apparent that \eqref{eq:predictive_local_sens_power} is a rather more complicated expression that captures perturbations on the predictive behaviour of the model and, therefore, it should be regarded as a measure of predictive robustness; however, its numerical approximation is not straightforward and is left for future research.

\subsection{Local sensitivity under censoring distortion}
In this subsection, we briefly consider the censoring distortion functions.
\begin{definition}
      Let $\alpha\in (0,1) $, a censoring distortion is induced by the functions $h_\alpha:[0,1]\to [0,1]$ and $\Tilde{h}_\alpha:[0,1]\to [0,1]$ given by
      \begin{align*}
        y\mapsto{h}_\alpha(y)=\frac{y-\alpha}{1-\alpha}\vee 0, \quad \quad y\mapsto\Tilde{h}_\alpha(y)=\frac{y}{\alpha} \wedge 1.
    \end{align*}
\end{definition}
The censoring distortion induces a truncation of the original random variable of the type $X_{ h_\alpha}=X\mathbf{1}{\{X> F_X^{-1}(\alpha)\}}$ and $X_{\Tilde h_\alpha}=X\mathbf{1}{\{X\leq F^{-1}_X(\alpha)\}}$. Therefore, in the local sensitivity framework, it is a peculiar way to measure robustness to arbitrary small truncation of the original model. Following the same path of power distortion, we obtain the following propositions.

\begin{proposition}
\label{prop:censoring_distortion_1}
Let ${h}_\alpha(y)=\frac{y-\alpha}{1-\alpha}\vee 0 $ with $\alpha\in (0,1)$, under the assumptions of Theorem~\ref{thm:lik_dist}, it holds
$ \delta^{ h_\alpha}_n =0$.
\end{proposition}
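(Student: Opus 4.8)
The plan is to invoke the covariance representation \eqref{eq:general_cov_lik} supplied by Theorem~\ref{thm:lik_dist} and then show that, for the censoring distortion, the second argument of the covariance is $\Pi^n$-almost surely \emph{constant} in $\vartheta$. Since the covariance of any random variable with a constant vanishes, the conclusion $\delta^{h_\alpha}_n = 0$ is immediate. The whole argument thus reduces to a direct computation of the derivative appearing inside the covariance.

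First I would identify $\alpha_0$. Because $h_\alpha(y) = \frac{y-\alpha}{1-\alpha}\vee 0 \to y$ as $\alpha \downarrow 0$ for every $y\in[0,1]$, the identity distortion is recovered at $\alpha_0 = 0$; the derivative in \eqref{eq:general_cov_lik} is therefore a one-sided derivative taken at the boundary of $(0,1)$. Next I would differentiate on the active branch: for $y > \alpha$ one has $h_\alpha(y) = (y-\alpha)/(1-\alpha)$, so $h'_\alpha(y) = 1/(1-\alpha)$, which does not depend on $y$. Hence $\log h'_\alpha(y) = -\log(1-\alpha)$ and
$$
\frac{\partial}{\partial\alpha}\log h'_\alpha(y)\Big|_{\alpha=0} = \frac{1}{1-\alpha}\Big|_{\alpha=0} = 1,
$$
independently of $y$. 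Substituting $y = F_X(x_i\,|\,\vartheta)$ and summing over $i$ yields the constant $n$, so the second entry of the covariance in \eqref{eq:general_cov_lik} equals $n$ $\Pi^n$-almost surely, and $\cov_{\Pi^n}[g(\vartheta), n] = 0$.

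The main obstacle is the interplay between the clipping $\vee\,0$ and the fact that $\alpha_0 = 0$ sits on the boundary of the admissible range, which must be handled to justify evaluating exclusively on the smooth branch $y > \alpha$. The assumption underlying Theorem~\ref{thm:lik_dist} guarantees $F_X(x_i\,|\,\vartheta) > 0$ $\Pi^n$-almost surely for each $i$; consequently, for $\Pi^n$-almost every $\vartheta$ there is an $\alpha$ small enough that $F_X(x_i\,|\,\vartheta) > \alpha$ simultaneously for all $i=1,\dots,n$, placing every argument strictly inside the region where $h'_\alpha$ is constant in $y$ and smooth in $\alpha$. This legitimises the pointwise one-sided differentiation and shows that the constancy holds on a set of full $\Pi^n$-measure, which is all that is needed for the covariance to vanish.
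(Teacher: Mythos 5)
Your proof is correct and follows essentially the same route as the paper: differentiate $\log h'_\alpha$ on the branch $y>\alpha$ to get the constant $1/(1-\alpha)$, evaluate at $\alpha_0=0$ to obtain the constant $n$ inside the covariance, and conclude that the covariance with a constant vanishes. Your justification for staying on the smooth branch via $F_X(x_i\,|\,\vartheta)>0$ $\Pi$-a.s.\ matches the paper's restriction (which it derives more laboriously through a Dirac-delta computation), so no gap remains.
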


\begin{proposition}
    \label{prop:censoring_distortion_2}
Let $\Tilde h_\alpha(y)=\frac{y}{\alpha} \wedge 1 $ with $\alpha\in (0,1)$, under the assumptions of Theorem~\ref{thm:lik_dist}, it holds
$\delta^{\Tilde h_\alpha}_n =0$.
\end{proposition}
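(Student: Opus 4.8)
The plan is to invoke the covariance representation \eqref{eq:general_cov_lik} from Theorem~\ref{thm:lik_dist} and to show that, for this distortion, the second argument of the covariance collapses to a $\Pi$-almost sure constant; since the covariance of any random variable with a constant vanishes, the conclusion $\delta^{\Tilde h_\alpha}_n=0$ follows immediately. First I would identify the undistorted value of the parameter: since $\Tilde{h}_\alpha(y)=(y/\alpha)\wedge 1$ coincides with the identity on $[0,1]$ precisely when $\alpha=1$, the relevant evaluation point is $\alpha_0=1$, approached from within $(0,1)$, so the $\alpha$-derivative in \eqref{eq:general_cov_lik} is a left derivative at a boundary point.

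Next I would compute $\Tilde{h}'_\alpha$ branch by branch. On the set $y<\alpha$ we have $\Tilde{h}_\alpha(y)=y/\alpha$, hence $\Tilde{h}'_\alpha(y)=1/\alpha$, whereas on $y>\alpha$ the distortion is flat and $\Tilde{h}'_\alpha(y)=0$; the two regimes meet at the kink $y=\alpha$. The key observation is that for any fixed $y\in(0,1)$ and any $\alpha$ in a left neighbourhood of $1$ (namely $\alpha\in(y,1)$) the point lies strictly in the linear branch, so $\log\Tilde{h}'_\alpha(y)=-\log\alpha$ and therefore
\begin{align*}
\frac{\partial}{\partial\alpha}\log\Tilde{h}'_\alpha(y)\Big|_{\alpha=1}=-\frac{1}{\alpha}\Big|_{\alpha=1}=-1 .
\end{align*}
Evaluating this at $y=F_X(x_i|\vartheta)$, which lies in $(0,1)$ $\Pi$-almost surely so that the kink is avoided, each summand in \eqref{eq:general_cov_lik} is $\Pi$-almost surely equal to the constant $-1$.

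Finally I would substitute into \eqref{eq:general_cov_lik}, obtaining
\begin{align*}
\delta^{\Tilde h_\alpha}_n=\cov_{\Pi^n}\Big[g(\vartheta),\sum_{i=1}^n(-1)\Big]=\cov_{\Pi^n}\big[g(\vartheta),-n\big]=0 ,
\end{align*}
because the covariance of $g(\vartheta)$ with the deterministic quantity $-n$ vanishes. The very same argument delivers Proposition~\ref{prop:censoring_distortion_1}, where the undistorted value is $\alpha_0=0$ (reached from above) and the corresponding log-derivative is the constant $+1$. The main obstacle is not the algebra but the care required at the boundary: one must justify that $\alpha_0=1$ is attained from inside $(0,1)$ and that $F_X(x_i|\vartheta)<1$ holds $\Pi$-almost surely, so that every evaluation stays off the kink $y=\alpha$ and Assumption~\ref{assump:distortion} (differentiability of $\alpha\mapsto\log\Tilde{h}'_\alpha$ at $\alpha_0$) is genuinely in force; the $L^1(\Pi)$ domination needed to apply Theorem~\ref{thm:lik_dist} is supplied by hypothesis.
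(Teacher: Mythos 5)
Your proof is correct and follows essentially the same route as the paper: compute $\partial_\alpha\log\Tilde h'_\alpha(F_X(x_i|\vartheta))\big|_{\alpha=1}$, observe that under the $\Pi$-a.s.\ restriction $F_X(x_i|\vartheta)<1$ each summand is a deterministic constant (so the sum equals $-n$), and conclude that the covariance in \eqref{eq:general_cov_lik} with a constant vanishes. Your pointwise treatment of the kink (fixing $y<1$ and noting the evaluation stays in the linear branch for all $\alpha$ in a left neighbourhood of $1$) is in fact cleaner than the paper's formal Dirac-delta manipulation, but the underlying argument is identical.
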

It is at once apparent that censoring distortion does not provide a useful local robustness analysis as the involved distortion functions induce a non-smooth shock in the original model. Similar results can be shown for prior distortion and prior-likelihood double distortion.

\subsection{Local sensitivity under skewing distortion}
A further distortion that we consider is related to the family of skewed distributions; see \cite{azzalini1985class} and \cite{ferreira2006constructive}. In particular, provided $X$ is symmetric w.r.t.\ 0, we are interested in introducing a certain amount of skewness to its distribution, leading to the family of skewed distributions with density
\begin{align*}
    f_{X,\alpha}(x|\theta)= 2 f_{X}(x|\theta)F_X(\alpha x|\theta).
\end{align*}
Note that $f_{X,\alpha}(x|\theta)$ is left skewed for $\alpha <0 $, and right skewed for $\alpha>0$.
In \cite{distorsion_priors2016} it is shown that the skewed family is a proper neighbourhood of the original distribution in terms
of the likelihood ratio order.
For our purposes, skewness might be induced via the following distortion function.
\begin{definition}
Let $\alpha \in \mathbb{R}\setminus \{0\}$, the skewing distortion of $X$ is induced by the function $h_\alpha: [0,1]\to [0,1]$ given by
\begin{align}
\label{eq:skewing_distortion}
   y\mapsto h_\alpha(y)= \int_{-\infty}^{F^{-1}_X(y)}2f_{X}(x|\theta)F_X(\alpha x|\theta)d\lambda(x). 
\end{align}
\end{definition}
In the following proposition, we obtain an expression for the local sensitivity under skewing distortion. 
\begin{proposition}
    \label{prop:skewing_distortion}
    Let $h_\alpha$ be the skewing distortion function defined in \eqref{eq:skewing_distortion}, under the assumptions of Theorem~\ref{thm:lik_dist}, it holds
    \begin{align}
\label{eq:skewing_lik}
    \delta^{h_\alpha}_n = \cov_{\Pi^n}\Big[g(\vartheta) ,2 f_X(0|\vartheta)\sum_{i=1}^n x_i\Big].
\end{align}
\end{proposition}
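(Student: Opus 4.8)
The plan is to invoke Theorem~\ref{thm:lik_dist}, which already expresses $\delta^{h_\alpha}_n$ as the posterior covariance \eqref{eq:general_cov_lik} of $g(\vartheta)$ against $\sum_{i=1}^n \frac{\partial}{\partial\alpha}\log h'_\alpha(F_X(x_i |  \vartheta))\big|_{\alpha=\alpha_0}$. Thus the whole task reduces to computing, for each fixed $\theta$, the quantity $\frac{\partial}{\partial\alpha}\log h'_\alpha(F_X(x_i |  \theta))$ at the identity point $\alpha_0$, together with a verification of the theorem's hypotheses.

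First I would differentiate the skewing distortion $h_\alpha$ of \eqref{eq:skewing_distortion}. Applying the fundamental theorem of calculus in the variable $y$, together with the inverse-function derivative $\frac{d}{dy}F_X^{-1}(y)=1/f_X(F_X^{-1}(y) |  \theta)$, the density factor $f_X(F_X^{-1}(y) |  \theta)$ cancels against its reciprocal, leaving the clean expression
$$
h'_\alpha(y) = 2\, F_X\big(\alpha\, F_X^{-1}(y) \,\big|\, \theta\big).
$$
Evaluating at $y=F_X(x_i |  \theta)$, so that $F_X^{-1}(y)=x_i$, gives $h'_\alpha(F_X(x_i |  \theta)) = 2\,F_X(\alpha x_i |  \theta)$, and hence $\log h'_\alpha(F_X(x_i |  \theta)) = \log 2 + \log F_X(\alpha x_i |  \theta)$.

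Next I would pin down the identity point $\alpha_0$. Since $X$ is symmetric about $0$, we have $F_X(0 |  \theta)=1/2$, so $h'_0(y)=2F_X(0 |  \theta)=1$ for every $y$; together with $h_0(0)=0$ this forces $h_0=\mathrm{id}$, i.e.\ $\alpha_0=0$. Differentiating $\log F_X(\alpha x_i |  \theta)$ in $\alpha$ and evaluating at $\alpha=0$ yields $\frac{x_i f_X(0 |  \theta)}{F_X(0 |  \theta)} = 2\,x_i f_X(0 |  \theta)$, using $F_X(0 |  \theta)=1/2$ once more. Summing over $i$ and substituting into \eqref{eq:general_cov_lik} produces exactly the claimed covariance \eqref{eq:skewing_lik}.

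The routine remaining step is to check the hypotheses of Theorem~\ref{thm:lik_dist}: Assumption~\ref{assump:distortion} holds because $\alpha\mapsto\log F_X(\alpha x_i |  \theta)$ is differentiable at $\alpha=0$ whenever $F_X(0 |  \theta)=1/2>0$, which is automatic; and the dominated-difference bound follows once $F_X(\alpha x_i |  \theta)$ is bounded away from $0$ on a neighbourhood of $\alpha_0$ and the usual $L^1(\Pi)$ integrability of $g$ holds. The only genuinely delicate point is the simplification of $h'_\alpha$: one must be careful that the $\theta$-dependence of the skewing distortion in \eqref{eq:skewing_distortion} is consistent with composing $h_\alpha$ with $F_X(\cdot |  \theta)$, so that the cancellation of the density factors is legitimate; once this is granted, the remainder is a direct computation.
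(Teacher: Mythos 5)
Your proposal is correct and follows essentially the same route as the paper's proof: both reduce to Theorem~\ref{thm:lik_dist}, use the identity $h'_\alpha(y)=2F_X(\alpha F_X^{-1}(y|\theta)|\theta)$ (which you derive explicitly via the fundamental theorem of calculus, where the paper cites \cite{distorsion_priors2016}), and evaluate the log-derivative at $\alpha_0=0$ using $F_X^{-1}(F_X(x_i|\theta)|\theta)=x_i$ and the symmetry relation $F_X(0|\theta)=1/2$. Your explicit identification of $\alpha_0=0$ and the remarks on verifying the hypotheses are slightly more detailed than the paper's, but the substance is the same.
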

An interpretation for \eqref{eq:skewing_lik} is that it measures the linear relationship between the (transformed) parameter vector $g(\vartheta)$ and $2 f_X(0|\vartheta)\sum_{i=1}^n x_i$, where the latter explicitly depends on the density function of $X$ evaluated at 0. Thus, whenever the model exhibits a strong linear relationship between the parameter and the density evaluated at 0 we may conclude a lack of robustness to modifications of the distribution skewness.
Extensions to prior and prior-likelihood double distortion can be trivially obtained, and owing to Corollary~\ref{cor:inequality}, a normalized version can be obtained as in \eqref{eq:delta_lik_normalized}.

\subsection{Approximation of local sensitivity measures via posterior sampling}
We now briefly discuss how the proposed local sensitivity measure can be easily approximated once a sample from the posterior distribution is available. Without loss of generality, we discuss this only for likelihood distortion. Extensions to prior distortion and prior-likelihood double distortion are straightforward.

Given a sample $X_{1:n}=x_{1:n}$ which at the moment is considered as fixed, $\delta^{h_\alpha}_n$ might be evaluated once a sample $\vartheta^{(1)},\dots, \vartheta^{(M)}$ from the posterior distribution is available, e.g. via standard Monte Carlo or Markov Chain Monte Carlo (MCMC) sampling. Indeed, for $M$ large enough, thanks to the Strong Law of Large Numbers or the Ergodic Theorem,  $\delta^{h_\alpha}_n$ can be approximated as
\begin{align}
\label{eq:approx}
    \delta^{h_\alpha}_n\approx{\frac{1}{M} \sum_{m=1}^M g(\vartheta^{(m)})T(\vartheta^{(m)}) - \frac{1}{M} \sum_{m=1}^M g(\vartheta^{(m)}) \frac{1}{M} \sum_{m=1}^M T(\vartheta^{(m)})  }
\end{align}
where $$T(\vartheta^{(m)}):=\sum_{i=1}^n \frac{\partial}{\partial \alpha}  \log  h'_\alpha (F_X(x_i |  \vartheta^{(m}))\Big|_{\alpha=\alpha_0} .$$
For instance, under power distortion, thanks to \eqref{eq:power_distortion_lik} it is at once apparent that $\delta^{h_\alpha}_n$ can be approximated replacing $T(\vartheta^{(m)})= \sum_{i=1}^n \log F_X(x_i |  \vartheta^{(m)})$. Equivalently $\delta^{\Tilde h_\alpha}_n$ can be approximated replacing $T(\vartheta^{(m)})= \sum_{i=1}^n \log S_X(x_i | \vartheta^{(m)})$. 
Moreover, under a skewing distortion, due to \eqref{eq:skewing_lik}, it suffices to set $T(\theta^{(m})=2 f_{X}(0|\vartheta^{(m})\sum_{i=1}^nx_i$.
It is also worth remarking that due to the theoretical results obtained, we do not need to sample the distorted posterior with acceptance-rejection algorithms as in \cite{distorsion_priors2016}.

\section{Asymptotic results}\label{sec:asymptotic_behaviour}
In this section, we study the asymptotic behaviour of $\delta^{h_\alpha}_n$ as $n\to \infty$.
Our results are derived under the assumption that the posterior distribution is consistent at some $\theta_0\in \Theta$, which means it converges towards a point mass at some rate of contraction. More precisely, given a suitable semi-metric $d$ on $\Theta$, a decreasing sequence $\varepsilon_n$ is a contraction rate at $\theta_0$ w.r.t.\ $d$ for the posterior distribution $\Pi^n$ if, for every $M_n\to \infty$, it holds that $\Pi^n(\theta: d(\theta,\theta_0) \geq M_n \varepsilon_n)=\Pi(\theta: d(\theta,\theta_0)\geq  M_n \varepsilon_n |  X_{1:n})\to 0$ in probability (or almost surely) under the product probability measure $P^{n}_{\theta_0}$ as $n\to \infty$. As usual, the requirement ``for every $M_n\to \infty$'' should be understood as ``for any arbitrarily slow $M_n\to \infty$''; see \cite{ghosal2017fundamentals} as a reference for the general theory on this topic. Note also that in the following we relax the assumption of non-randomness of the sample, thus, for instance, $g(\vartheta)$ and $F_X(X_i | \vartheta)$ are seen as random variables with randomness given by both $X_i$ and $\vartheta$.
The asymptotic results are derived under the following assumptions.
\begin{assumption}
    \label{assump:ftheta}
    The functions $g:\Theta \to \mathbb{R}^s$ (in Definition~\ref{def:local_sensitivity}) and $r_i:\Theta \to \mathbb{R}$ given by
    \begin{align*}
         \vartheta \mapsto r_i(\vartheta):= \frac{\partial}{\partial \alpha}\log  (h'_\alpha(F_{X}(X_i|\vartheta)))\Big|_{\alpha=\alpha_0}, \ \ i=1,\dots,n,
    \end{align*}
are differentiable with bounded partial derivatives at $\theta_0$.
\end{assumption}

\begin{assumption}
\label{assump:metric}
    The posterior distribution $\Pi^n$ is consistent at $\theta_0\in \Theta$ with contraction rate $\varepsilon_n$ w.r.t.\ a bounded semi-norm-induced semi-metric $d$ such that the map $\theta \mapsto d^q(\theta,\theta_0)$ is convex for some $q\geq 1$, and $\Pi^n(\theta: d(\theta,\theta_0)> M_n \varepsilon_n)= O{\scriptscriptstyle P_{\theta_0}^n}(M_n\varepsilon_n)$ for every $M_n\to \infty$.
\end{assumption}

The following result concerns the asymptotic behaviour of $\delta_n^{h_\alpha}$ and shows that it depends on the contraction rate of the posterior distribution.
\begin{theorem}
\label{thm:asymp}
     Suppose that Assumptions~\ref{assump:ftheta}~and~\ref{assump:metric} hold, then under the assumptions of Theorem~\ref{thm:lik_dist} 
     $$  \delta^{h_\alpha}_n=  \cov_{\Pi^n} \Big[g(\vartheta), \sum_{i=1}^n \frac{\partial}{\partial \alpha}  \log  h'_\alpha (F_X(X_i |  \vartheta))\Big|_{\alpha=\alpha_0}\Big]= O{\scriptscriptstyle P^{n}_{\theta_0}}(n\varepsilon_n^2)
     $$
     as $n\to \infty$.
\end{theorem}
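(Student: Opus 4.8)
The plan is to start from the covariance representation already furnished by Theorem~\ref{thm:lik_dist}, now reading the sample as random, and to reduce the whole problem to a second-moment bound on the posterior concentration. Writing $r_i(\vartheta)=\frac{\partial}{\partial\alpha}\log h'_\alpha(F_X(X_i|\vartheta))|_{\alpha=\alpha_0}$ and $R_n(\vartheta)=\sum_{i=1}^n r_i(\vartheta)$, Theorem~\ref{thm:lik_dist} gives $\delta^{h_\alpha}_n=\cov_{\Pi^n}[g(\vartheta),R_n(\vartheta)]$, and Corollary~\ref{cor:inequality} (Cauchy--Schwarz) yields the component-wise bound $|\delta^{h_\alpha}_n|\le \var_{\Pi^n}[g(\vartheta)]^{1/2}\var_{\Pi^n}[R_n(\vartheta)]^{1/2}$. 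Thus it suffices to control the two posterior variances separately; I will treat $g$ componentwise and combine at the end, since the claimed rate is for the Euclidean norm of the vector $\delta^{h_\alpha}_n$.

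First I would bound each variance by exploiting the differentiability with bounded partial derivatives at $\theta_0$ granted by Assumption~\ref{assump:ftheta}. This gives local Lipschitz continuity of $g$ and of each $r_i$ on a neighbourhood of $\theta_0$ (comparing the Euclidean norm with the semi-metric $d$ where needed), with Lipschitz constants that are uniform in $i$. Since $\var_{\Pi^n}[Y]\le \E_{\Pi^n}[(Y-Y(\theta_0))^2]$, local Lipschitzness yields $\var_{\Pi^n}[g(\vartheta)]\lesssim \E_{\Pi^n}[d^2(\vartheta,\theta_0)]$. For the second factor, because $R_n$ is a sum of $n$ functions each with the same bounded Lipschitz constant, its own Lipschitz constant is of order $n$, whence $\var_{\Pi^n}[R_n(\vartheta)]\lesssim n^2\,\E_{\Pi^n}[d^2(\vartheta,\theta_0)]$. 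Strictly, the Lipschitz bound is only local, so one also needs to absorb the contribution of the event $\{d(\vartheta,\theta_0)>\eta\}$, whose posterior mass vanishes by consistency; this is where the boundedness of $d$ is convenient.

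The key step is then to show $\E_{\Pi^n}[d^2(\vartheta,\theta_0)]=O_{P_{\theta_0}^n}(\varepsilon_n^2)$, which is where Assumption~\ref{assump:metric} enters in full. I would write the moment as a tail integral, $\E_{\Pi^n}[d^2(\vartheta,\theta_0)]=\int_0^\infty 2t\,\Pi^n(d(\vartheta,\theta_0)>t)\,dt$, and split at a truncation level $M_n\varepsilon_n$. On $[0,M_n\varepsilon_n]$ the integrand is bounded by $2t$, contributing $O(M_n^2\varepsilon_n^2)$; on the tail the boundedness of $d$, the convexity of $\theta\mapsto d^q(\theta,\theta_0)$, and the quantitative contraction rate $\Pi^n(d(\vartheta,\theta_0)>M_n\varepsilon_n)=O_{P_{\theta_0}^n}(M_n\varepsilon_n)$ are used to show that the remaining mass is negligible at the $\varepsilon_n^2$ scale. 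Balancing $M_n$ (taken to grow arbitrarily slowly) so that the inner term stays $O(\varepsilon_n^2)$ while the tail term is absorbed is the delicate point, and I expect this balancing---turning a statement about vanishing tail probabilities into a genuine $L^2$ bound---to be the main obstacle of the proof.

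Finally I would combine the pieces: $\var_{\Pi^n}[g(\vartheta)]^{1/2}=O_{P_{\theta_0}^n}(\varepsilon_n)$ and $\var_{\Pi^n}[R_n(\vartheta)]^{1/2}=O_{P_{\theta_0}^n}(n\varepsilon_n)$, so their product, and hence $|\delta^{h_\alpha}_n|$, is $O_{P_{\theta_0}^n}(n\varepsilon_n^2)$. Collecting the finitely many components of $g$ preserves the rate, giving the stated conclusion for the vector $\delta^{h_\alpha}_n$.
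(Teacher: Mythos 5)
Your skeleton (reduce to a posterior second-moment bound via Cauchy--Schwarz and local Lipschitz control) is reasonable, and you correctly locate the crux, but the key step as you describe it does not go through: with only Assumption~\ref{assump:metric} in hand, the tail-integral split does \emph{not} give $\E_{\Pi^n}[d^2(\vartheta,\theta_0)]=O_{P_{\theta_0}^n}(\varepsilon_n^2)$. Writing $\E_{\Pi^n}[d^2(\vartheta,\theta_0)]=\int_0^{\|d\|_\infty}2t\,\Pi^n(d(\vartheta,\theta_0)>t)\,dt$ and splitting at $M_n\varepsilon_n$, the inner piece is $O(M_n^2\varepsilon_n^2)$ as you say, but the outer piece is only controlled by $\|d\|_\infty^2\,\Pi^n(d(\vartheta,\theta_0)>M_n\varepsilon_n)=O_{P_{\theta_0}^n}(M_n\varepsilon_n)$, since the assumption provides a tail bound at the single threshold $M_n\varepsilon_n$ and says nothing about how $\Pi^n(d>t)$ decays as $t$ grows beyond it. Because $M_n\varepsilon_n\gg\varepsilon_n^2$ whenever $\varepsilon_n\to 0$, the tail term dominates, and your route delivers at best $\E_{\Pi^n}[d^2(\vartheta,\theta_0)]=O_{P_{\theta_0}^n}(M_n\varepsilon_n)$, hence $|\delta_n^{h_\alpha}|\le \var_{\Pi^n}[g(\vartheta)]^{1/2}\var_{\Pi^n}[R_n(\vartheta)]^{1/2}=O_{P_{\theta_0}^n}(nM_n\varepsilon_n)$. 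For the parametric rate $\varepsilon_n=n^{-1/2}$ this is $O_{P_{\theta_0}^n}(M_n\sqrt{n})$ rather than the claimed $O_{P_{\theta_0}^n}(1)$, so the gap is not cosmetic; converting the single-scale tail-probability statement into a genuine $L^2$ bound is exactly the obstacle you flagged, and it is not resolved by boundedness and convexity of $d$ alone.

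For contrast, the paper does not pass through Cauchy--Schwarz and a raw second moment of $d$. It Taylor-expands $g$ and each $r_i$ to first order at $\theta_0$ (Assumption~\ref{assump:ftheta}) and uses bilinearity of the covariance, so that the leading term is $J_{\theta}g(\theta_0)\,\cov_{\Pi^n}[\vartheta,\vartheta]\,\sum_{i=1}^n\nabla_{\theta}r_i(\theta_0)$ with $\sum_i\nabla_\theta r_i(\theta_0)=O(n)$; the $\varepsilon_n^2$ scaling then comes from the posterior \emph{covariance} of $\vartheta$, which is controlled through the posterior-mean contraction result (Theorem~8.8 of \citealp{ghosal2017fundamentals}) combined with the quantitative bound in Assumption~\ref{assump:metric}, followed by a tightness argument to remove the slowly growing $M_n$. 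The essential difference is that the covariance centers both factors at the posterior mean, so each factor contributes one power of the contraction rate, whereas your Cauchy--Schwarz bound forces you to prove an uncentered $L^2$ contraction of $d(\vartheta,\theta_0)$ from a tail bound that is too weak to integrate. If you want to keep your structure, you would need to strengthen the tail control (e.g., a polynomial or exponential decay of $\Pi^n(d>t)$ in $t/\varepsilon_n$) or follow the paper in reducing to the posterior covariance matrix rather than to $\E_{\Pi^n}[d^2(\vartheta,\theta_0)]$.
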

Theorem~\ref{thm:asymp} highlights that the asymptotic behaviour of $\delta^{h_\alpha}_n$ is related to the squared contraction rate of the posterior distribution. A direct consequence is that the sequence $\{\delta_n^{h_\alpha}\}_{n\in \mathbb{N}}$ is tight under parametric contraction rate, i.e.\ for $\varepsilon_n=n^{-1/2}$ we get $\delta_n^{h_\alpha}=O{\scriptscriptstyle P^{n}_{\theta_0}}(1)$. 
When the contraction rate is slower than $ n^{-1/2}$, it yields that $\{\delta_n^{h_\alpha}\}_{n\in \mathbb{N}}$ is not guaranteed to be tight.
Note also that the asymptotic results given above can be straightforwardly extended to non-i.i.d.\ settings with a slight effort of notation.

To conclude the asymptotic results, we give a weak convergence result that holds for models whose posterior distribution achieves parametric contraction rate.

\begin{theorem}
\label{thm:clt}
    Let $p:=\sup\{j>0: \E[|X|^j]<\infty\}\geq 2 $ and suppose that the function
    $$x \mapsto \psi(x):=  \nabla_{\scriptscriptstyle \theta} \Big(\frac{\partial}{\partial \alpha}\log  (h'_\alpha(F_{X}(x|\theta)))\Big|_{\alpha=\alpha_0}\Big)\Big|_{\theta=\theta_0}$$
    is $O(x^{p/2})$.
    Then, under the assumptions of Theorem~\ref{thm:asymp}, there exist $\delta_0 \in \mathbb{R}^s$ and $\Sigma \in \mathbb{R}^{s\times s}$ such that as $n\to \infty$ 
    \begin{align} \label{clt}
        Z_n = \sqrt{n}\Sigma^{-1/2}(\delta_n ^{h_\alpha}- \delta_0)
    \end{align}
    converges in distribution to a standard multivariate Gaussian r.v. $Z\sim \mathcal{N}(0,I)$.
\end{theorem}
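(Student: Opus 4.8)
The plan is to linearise the posterior covariance of Theorem~\ref{thm:lik_dist} about $\theta_0$, reduce $\delta_n^{h_\alpha}$ to a smooth functional of the empirical mean $\frac1n\sum_{i=1}^n\psi(X_i)$, and then combine the Bernstein--von Mises behaviour forced by the parametric contraction rate with the classical multivariate central limit theorem for the i.i.d.\ summands $\psi(X_i)$. Writing $r_i(\vartheta)=\frac{\partial}{\partial\alpha}\log(h'_\alpha(F_X(X_i|\vartheta)))|_{\alpha=\alpha_0}$ as in Assumption~\ref{assump:ftheta}, Theorem~\ref{thm:lik_dist} gives $\delta_n^{h_\alpha}=\cov_{\Pi^n}[g(\vartheta),\sum_{i=1}^n r_i(\vartheta)]$. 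Since the posterior contracts at $\theta_0$, I would Taylor-expand both arguments, $g(\vartheta)=g(\theta_0)+Dg(\theta_0)(\vartheta-\theta_0)+R^g(\vartheta)$ and $r_i(\vartheta)=r_i(\theta_0)+\psi(X_i)^\top(\vartheta-\theta_0)+R^r_i(\vartheta)$, where $\psi(X_i)=\nabla_\theta r_i(\theta_0)$ is exactly the vector in the statement and $Dg(\theta_0)\in\mathbb{R}^{s\times k}$ is the Jacobian from Assumption~\ref{assump:ftheta}. Constants drop out of the covariance, and bilinearity isolates the leading term $Dg(\theta_0)\,\Sigma_n\sum_{i=1}^n\psi(X_i)$ with $\Sigma_n:=\cov_{\Pi^n}[\vartheta]$ the posterior covariance matrix.

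Next, under the parametric rate of Assumption~\ref{assump:metric} together with the regularity of Assumption~\ref{assump:ftheta}, the rescaled posterior is asymptotically Gaussian, so $n\Sigma_n\to I(\theta_0)^{-1}$ in $P^n_{\theta_0}$-probability, with $I(\theta_0)$ the Fisher information; the growth bound $\psi(x)=O(x^{p/2})$ with $p\geq2$ makes $\psi(X)$ square-integrable and supplies the uniform integrability needed to pass from weak convergence of the rescaled posterior to convergence of its moments. Substituting yields $\delta_n^{h_\alpha}=Dg(\theta_0)I(\theta_0)^{-1}\frac1n\sum_{i=1}^n\psi(X_i)+o_{P^n_{\theta_0}}(n^{-1/2})$. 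Because the $\psi(X_i)$ are i.i.d.\ and square-integrable, the law of large numbers gives $\frac1n\sum_i\psi(X_i)\to\bar\psi:=\E[\psi(X)]$, hence $\delta_n^{h_\alpha}\to\delta_0:=Dg(\theta_0)I(\theta_0)^{-1}\bar\psi$, while the multivariate CLT gives $\frac1{\sqrt n}\sum_i(\psi(X_i)-\bar\psi)\Rightarrow\mathcal N(0,V)$ with $V=\cov[\psi(X)]$. Setting $A:=Dg(\theta_0)I(\theta_0)^{-1}$ and $\Sigma:=AVA^\top$ (assumed nondegenerate, so $\Sigma^{-1/2}$ exists), Slutsky's theorem gives $\sqrt n(\delta_n^{h_\alpha}-\delta_0)\Rightarrow\mathcal N(0,\Sigma)$, and whitening by $\Sigma^{-1/2}$ delivers the stated $Z_n\Rightarrow\mathcal N(0,I)$.

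The hard part is controlling the remainder and showing it is $o_{P^n_{\theta_0}}(n^{-1/2})$. A naive bound on the second-order term is only $O_{P^n_{\theta_0}}(1)$, the same order as the leading term, since $\sum_i\|\nabla^2 r_i(\theta_0)\|=O(n)$ while $\E_{\Pi^n}|\vartheta-\theta_0|^2=O(1/n)$ by the parametric contraction rate (Assumption~\ref{assump:metric}). The saving is structural: inside the covariance the second-order remainder enters only through \emph{third} central moments of the posterior, which vanish at rate $o(n^{-3/2})$ precisely because the limiting posterior is symmetric. Making this rigorous means propagating the moment conditions through the Bernstein--von Mises expansion so that the product $\sum_i\|\nabla^2 r_i(\theta_0)\|\cdot o(n^{-3/2})$ is genuinely $o(n^{-1/2})$; this is exactly where $p\geq2$ and the $O(x^{p/2})$ growth of $\psi$ (and of the higher-order derivatives of $r_i$) are needed, simultaneously giving the square-integrability used in the CLT and the uniform integrability used to annihilate the higher-order posterior moments.
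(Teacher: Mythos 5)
Your proposal follows essentially the same route as the paper's proof: Taylor-expand $g$ and the $r_i$ about $\theta_0$ under the parametric contraction rate, reduce $\delta_n^{h_\alpha}$ to the leading term $J_{\theta}g(\theta_0)\cov_{\Pi^n}[\vartheta,\vartheta]\sum_{i=1}^n\psi(X_i)$ plus a remainder, and apply the classical CLT to the i.i.d.\ gradients $\psi(X_i)$. You are in fact more explicit than the paper, which leaves $\delta_0$ and $\Sigma$ as ``appropriately chosen'' and neither identifies the limit $n\cov_{\Pi^n}[\vartheta,\vartheta]\to I(\theta_0)^{-1}$ via Bernstein--von Mises nor tracks the remainder at the $o_{P^n_{\theta_0}}(n^{-1/2})$ scale as you do.
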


\begin{example}
    \label{ex:asympt}
Continuing Example~\ref{ex:prop} we proceed to apply the asymptotic results obtained in Theorems~\ref{thm:asymp}~and~\ref{thm:clt}. It is easy to check that Assumption~\ref{assump:ftheta} is satisfied since
    \begin{align*}
        \frac{\partial}{\partial \alpha}\log  (\Tilde h_\alpha'(F_{X}(X_1|\theta)))\Big|_{\alpha=1}&= \frac{\partial}{\partial \alpha} \Big(\log\alpha + (\alpha-1)\log(1-F_{X}(X_1|\theta))\Big) \Big|_{\alpha=1} = 1-X_1\theta.
    \end{align*}
Moreover, it is immediate to see that by Lemma 8.2 of \cite{ghosal2017fundamentals} the posterior is consistent at the true $\theta_0\in \Theta$ with contraction rate $\varepsilon_n=n^{-1/2}$ in the $L^2$-metric, and the condition $\Pi^n(\theta: \|\theta-\theta_0\|_2> M_n n^{-1/2})= O{\scriptscriptstyle P_{\theta_0}^n}(M_nn^{-1/2})$ is satisfied by the classical Bernstein-von-Mises Theorem~(\citealp{lecamasymptotic}) and the Chernoff exponential tail bound for the chi-squared distribution.
Therefore, Theorem~\ref{thm:asymp} applies and it yields that $ \delta^{\Tilde h_\alpha}_n=O{\scriptscriptstyle P^{n}_{\theta_0}}(1)$. This is easily confirmed by observing that as $n \to \infty$
\begin{equation}
\label{eq:analytical-verification1}
  \delta^{\Tilde h_\alpha}_n =  -\var_{\Pi^n}[\vartheta]\sum_{i=1}^nX_i\sim -\frac{n}{\sum_{i=1}^nX_i}\to \overset{P^n_{\theta_0}}{\to} -\theta_0.
\end{equation}
Finally, to apply Theorem~\ref{thm:clt} we check that 
$$\psi(x)=\nabla_{\scriptscriptstyle \theta} \Big(\frac{\partial}{\partial \alpha}\log  (\Tilde h'_\alpha(F_{X}(x|\theta)))\Big|_{\alpha=\alpha_0}\Big)\Big|_{\theta=\theta_0}=-x=O(x^{p/2})$$
for $p=\sup\{j>0: \E[|X|^j]<\infty\}$, because for the exponential distribution $p=\infty$.
Thus, let $\delta_0=-\theta_0$ and $\Sigma=\theta_0^2$, by Theorem~\ref{thm:clt} as $n\to\infty$
    \begin{align*}
           Z_n=\sqrt{n} \Sigma^{-1/2} (\delta^{\Tilde h_\alpha}_n-\delta_0)\sim\frac{\sqrt{n}}{\theta_0} \Big(\theta_0-\frac{n}{\sum_{i=1}^n X_i}\Big)
    \end{align*}
    converges in distribution to a standard Gaussian r.v. $Z\sim \mathcal{N}(0,1)$.
\end{example}

\section{Numerical experiments}
\label{sec:num}
In this section, we provide some numerical experiments, both on simulated and real data, to illustrate the theory and to show its applicability to Bayesian statistical modelling.
\subsection{Simulation results}
As a first simulation experiment, we validate the theory that we derived for the basic gamma-exponential model presented in Example~\ref{ex:asympt}.
We draw i.i.d.\ samples with varying sizes $n\in \{ j ^3: j = 3,\dots, 10 \}$ from an exponential distribution with parameter $\theta_0=0.5$
and compute the local sensitivity ${\delta}^{\Tilde h_\alpha}_n$ with its 95\% confidence interval under power distortion. For the computation, we used the approximation given in \eqref{eq:approx} with $M=10000$ samples from the posterior distribution. Figure~\ref{fig:gammaexp} shows that the local sensitivity measure converges to $-\theta_0$, as illustrated analytically in Example~\ref{ex:asympt}.
\begin{figure}[!ht]
    \centering
    \includegraphics[]{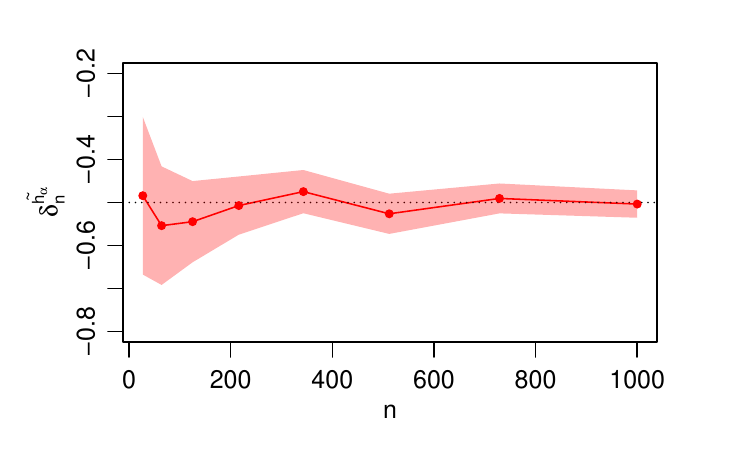}
    \caption{Approximated ${\delta}^{\Tilde h_\alpha}_n$ under power distortion of the likelihood with varying sample sizes. The dotted line corresponds to the theoretical value $-\theta_0=-0.5$ and the shaded region is the 95\% confidence interval obtained by an application of Theorem~\ref{thm:clt}.}
    \label{fig:gammaexp}
\end{figure}

The second simulation experiment that we present is an application to model selection. We generate random samples of size $n\in \{50,200\}$ with the following data-generating processes (DGP): $\mathrm{Gamma}(1,1)$, $\mathrm{LogNormal}(0,1)$ and $\mathrm{Exponential}(1)$. Successively, we fit those distributions on the obtained samples and compute ${\delta}^{ h_\alpha}_n$ under power distortion using the approximation in \eqref{eq:approx} with $M=2000$ samples from the posterior distribution obtained through standard Monte Carlo or MCMC sampling. We consider the case where $g(\vartheta)= \vartheta_j$ with $j=1,\dots,k$. The results are reported in Table~\ref{tab:n1} ($n=50$) and Table~\ref{tab:n2} ($n=200$). In several cases, we notice that ${\delta}^{ h_\alpha}_n$ is larger in absolute value when we fit a distribution that is different from the DGP. In particular, in the case of a multi-parameter distribution, if we consider the average absolute ${\delta}^{ h_\alpha}_n$, in several cases we conclude that higher robustness is shown when the distribution that coincides with the true DGP is fitted.
Moreover, for both $n=50$ and $n=200$ the Gamma distribution is always more robust when the true DGP is $\mathrm{Exponential}(1)$. We could also note that if we focus on the whole parameter vector of the two multi-parameter distributions and compare its associated average absolute local sensitivity to that of the rate parameter of the Exponential distribution, we observe that the latter possesses good robustness properties, even though its goodness-of-fit may be smaller. One possible reason is that it is less sensitive to outliers, as it does not have the additional shape parameter. On the other hand, if we compare the Gamma and the Exponential distributions but restrict our focus to the rate parameters, we can conclude that the Gamma distribution is more robust.
\begin{table}[!ht]
  \centering
  \begin{tabular}{lccc}
    \hline
    DGP $\backslash$ Distribution & $\mathrm{Gamma}(\alpha,\beta)$  &$\mathrm{LogNormal}(\alpha,\beta)$  & $\mathrm{Exponential}(\theta)$  \\
    \hline
$\mathrm{Gamma}(1,1)$       &(-0.99, -0.25) & (-1.17, 0.47) & 0.56\\
$\mathrm{LogNormal}(0,1)$   &(-1.19, -0.30) & (-0.80, 0.26) & 0.56\\
$\mathrm{Exponential}(1)$       &(-0.79, -0.22) & (-1.42, 0.52) & 0.62\\
    \hline
  \end{tabular}
    \caption{Approximated ${\delta}^{ h_\alpha}_n$ under power distortion of the likelihood with $n=50$.}
    \label{tab:n1}
\end{table}

\begin{table}[!ht]
  \centering
  \begin{tabular}{lccc}
    \hline
    DGP $\backslash$ Distribution & $\mathrm{Gamma}(\alpha,\beta)$  &$\mathrm{LogNormal}(\alpha,\beta)$  & $\mathrm{Exponential}(\theta)$  \\
    \hline
$\mathrm{Gamma}(1,1)$         & (-0.87, -0.21) & (-1.15, 0.50)& 0.61\\
$\mathrm{LogNormal}(0,1)$     & (-1.13, -0.18) & (-0.85, 0.29)& 0.39\\
$\mathrm{Exponential}(1)$         & (-0.99, -0.28) & (-1.07, 0.41)& 0.66\\
    \hline
  \end{tabular}
    \caption{Approximated ${\delta}^{ h_\alpha}_n$ under power distortion of the likelihood with $n=200$.}
     \label{tab:n2}
\end{table}
\subsection{Applications to real data}
To further illustrate the methodology, we consider applications to reliability and earthquake data. In particular, under the same modelling framework of the second simulation experiment, we fit some parametric models via standard posterior sampling and we measure the local sensitivity w.r.t.\ the likelihood function under power distortion. For all the fitted distributions and datasets, we take the posterior mean as a Bayesian point estimator.

The first dataset that we consider is the Windshield Failure Data reported in \cite{murthy2004weibull}, Table~16.11, which is composed of failure times of 84 aircraft windshields collected with a unit of measurement of 1000h. The data set has previously been analysed in \cite{el2015exponential} and \cite{ijaz2019lomax}. In Table~\ref{tab:data} we report the results of the approximated ${\delta}^{ h_\alpha}_n$ under power distortion of the likelihood. The LogNormal distribution is less sensitive to small perturbations of the likelihood through power distortion if compared to the Gamma, which exhibits a strong sensitivity of the shape parameter. Moreover, the Exponential distribution appears to be more robust; however, as we can see in Figure~\ref{fig:data}, this could be a consequence of a strong lack of fit, which might compromise the significance and effectiveness of robustness diagnostics.

The second dataset that we consider is based on worldwide Earthquake data from January 1, 2000, to December 31, 2023, with a minimum magnitude of 7. The data are publicly available for download on the earthquake section of the United States Geological Survey (USGS) official website. In particular, we consider the distribution of the inter-arrival times between 361 earthquakes measured in months. In this case, from Figure~\ref{fig:data} we can appreciate that, as expected, all three models exhibit a good fit to the data; however, from Table~\ref{tab:data} it is clear that the Gamma and the Exponential distributions are much more robust to small perturbations of the likelihood through power distortion.

\begin{figure}[!ht]
    \centering
    \includegraphics[]{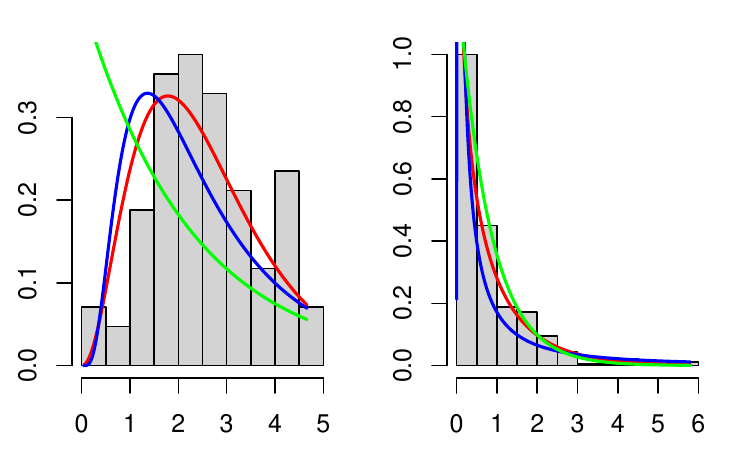}
    \caption{Histograms and fitted densities for the two considered datasets: Windshield Failure Data on the left, Earthquake Data on the right. Colours:  Gamma in red, LogNormal in blue and Exponential in green.}
    \label{fig:data}
\end{figure}
\begin{table}[!ht]
  \centering
  \begin{tabular}{lccc}
    \hline
     Dataset $\backslash$ Distribution & $\mathrm{Gamma}(\alpha,\beta)$  &$\mathrm{LogNormal}(\alpha,\beta)$  & $\mathrm{Exponential}(\theta)$  \\
    \hline
Windshield Failure Data   &  (-3.80, -0.97)  & (-0.58, 0.28) & 0.25\\ 
Earthquake Data    &(-0.60, -0.21) & (-1.66,  0.70) & 0.83\\
    \hline
  \end{tabular}
    \caption{Approximated ${\delta}^{ h_\alpha}_n$ under power distortion of the likelihood for the Windshield Failure Data and the Earthquake Data.}
     \label{tab:data}
\end{table}

\section{Discussion}
\label{sec:conc}
In this paper, we introduced a mathematically tractable approach to derive local sensitivity measures for Bayesian robustness w.r.t.\ the likelihood function. Further extensions to prior distortion and prior-likelihood double distortion are also presented. This new approach, based on the distortion technique, is shown to be convenient in terms of interpretability and computational effort, and it leads to local sensitivity measures that possess suitable asymptotic properties. Moreover, it provides a general framework for an interpretable elicitation of the robustness property that one wants to check.
Numerical experiments showed promising results regarding the applicability of such measures to model selection. 

Now we list some open questions and further developments that we find promising. An interesting future development is the extension of this approach to the global robustness framework and the exploitation of the predictive robustness framework that we introduced in the paper.
A further development that has not received much attention in the literature is to consider the second-order derivative in the definition of the local sensitivity, to capture the curvature of the posterior. A similar concept of local curvature has been introduced in \cite{dey1994robust} for divergence measures based on $\epsilon$-contamination classes and mixture classes.
Finally, following \cite{zhu2001case}, the local sensitivity measures that we introduced could be used as a case deletion diagnostic measure to detect influential observations.

\appendix
\section{Proofs}
\label{sec:proofs}
\subsection*{Proofs for Section~\ref{sec:dist}}
\begin{proof}[Proof of Theorem~\ref{thm:lik_dist}]
    Notice that
    \begin{align*}
        \frac{\partial}{\partial \alpha }\prod_{i=1}^n h'_\alpha  (F_X(x_i |  \theta))&= \frac{\partial }{\partial \alpha} \exp\Big( \sum_{i=1}^n \log  h'_\alpha(F_X(x_i |  \theta))\Big)\\
        &= \prod_{i=1}^n  h'_\alpha (F_X(x_i |  \theta)) \frac{\partial}{\partial \alpha}\Big(\sum_{i=1}^n \log  h'_\alpha(F_X(x_i |  \theta))  \Big),
    \end{align*}
and by definition of $\alpha_0$ together with Assumption~\ref{assump:distortion} we have that
\begin{align*}
    \lim_{\alpha \to \alpha_0} \prod_{i=1}^n  h'_\alpha (F_X(x_i |  \theta))  = 1,
\end{align*}
which clearly holds also when $X$ is discrete by Remark~\ref{rmk:discrete}.  
Hence, the Dominated Convergence Theorem yields that 
 \begin{align*}
         \frac{\partial}{\partial \alpha}\E_{\Pi_{h_\alpha}^n}[g(\vartheta)]\Big|_{\alpha=\alpha_0}&= \frac{\partial}{\partial \alpha} \frac{\int_\Theta g(\theta) \mathcal{L}_{h_\alpha}(\theta) d\Pi(\theta)  }{\int_\Theta \mathcal{L}_{h_\alpha}(\theta) d\Pi(\theta) }\Big|_{\alpha=\alpha_0}\\
    &=\frac{\int_\Theta  g(\theta) \frac{\partial}{\partial \alpha} \Big(\sum_{i=1}^n \log  h'_\alpha (F_X(x_i |  \theta)) \Big)\Big|_{\alpha=\alpha_0} d \Pi(\theta |  x_{1:n}) }{\int_\Theta d \Pi{(\theta |  x_{1:n})} } \\
    &- \frac{\int_\Theta g(\theta)   d \Pi(\theta |  x_{1:n}) }{\int_\Theta d \Pi{(\theta |  x_{1:n})} }\frac{\int_\Theta \frac{\partial}{\partial \alpha} \Big(\sum_{i=1}^n \log  h'_\alpha (F_X(x_i |  \theta)) \Big)\Big|_{\alpha=\alpha_0} d \Pi(\theta |  x_{1:n}) }{\int_\Theta d \Pi{(\theta |  x_{1:n})} } \\
    &= \cov_{\Pi^n}\Big[g(\vartheta), \sum_{i=1}^n\frac{\partial}{\partial \alpha} \log  h'_\alpha (F_X(x_i |  \vartheta)) \Big|_{\alpha=\alpha_0}\Big],
    \end{align*}
 where we used that $ d \Pi_{h_\alpha}{(\theta |  x_{1:n})} = d \Pi{(\theta |  x_{1:n})}$ for $\alpha = \alpha_0$. The proof is concluded.
\end{proof}

\begin{proof}[Proof of Theorem~\ref{thm:prior_distortion}]
    Using the same arguments as the proof of Theorem~\ref{thm:lik_dist} we apply the Dominated Convergence Theorem and obtain 
     \begin{align*}
         \frac{\partial}{\partial \alpha}\E_{\Pi_{h_\alpha}^n}[g(\vartheta)]\Big|_{\alpha=\alpha_0}&= \frac{\partial}{\partial \alpha} \frac{\int_\Theta g(\theta) \mathcal{L}(\theta) d\Pi_{h_\alpha}(\theta)  }{\int_\Theta \mathcal{L}(\theta) d\Pi_{h_\alpha}(\theta) }\Big|_{\alpha=\alpha_0}\\
    &=\frac{\int_\Theta  g(\theta) \frac{\partial}{\partial \alpha}  \log  h'_\alpha (F_\vartheta(\theta)) \Big|_{\alpha=\alpha_0} d \Pi(\theta |  x_{1:n}) }{\int_\Theta d \Pi{(\theta |  x_{1:n})} } \\
    &- \frac{\int_\Theta g(\theta)   d \Pi(\theta |  x_{1:n}) }{\int_\Theta d \Pi{(\theta |  x_{1:n})} }\frac{\int_\Theta \frac{\partial}{\partial \alpha}  \log  h'_\alpha (F_\vartheta(\theta)) \Big|_{\alpha=\alpha_0} d \Pi(\theta |  x_{1:n}) }{\int_\Theta d \Pi{(\theta |  x_{1:n})} } \\
    &= \cov_{\Pi^n}\Big[g(\vartheta), \frac{\partial}{\partial \alpha}  \log  h'_\alpha (F_\vartheta(\theta)) \Big|_{\alpha=\alpha_0}\Big].
    \end{align*}
  where we used that $ d \Pi_{h_\alpha}{(\theta |  x_{1:n})} = d \Pi{(\theta |  x_{1:n})}$ for $\alpha = \alpha_0$. The proof is concluded.
\end{proof}

\begin{proof}[Proof of Theorem~\ref{thm:double_distortion}]
    The proof is a combination of the arguments given in the proofs of Theorems~\ref{thm:lik_dist}~and~\ref{thm:prior_distortion}.
     Thus, we employ the Dominated Convergence Theorem to obtain
      \begin{align*}
         &\frac{\partial}{\partial \alpha}\E_{\Pi_{h_\alpha}^n}[g(\vartheta)]\Big|_{\alpha=\alpha_0}= \frac{\partial}{\partial \alpha} \frac{\int_\Theta g(\theta) \mathcal{L}_{h_\alpha}(\theta) d\Pi_{h_\alpha}(\theta)  }{\int_\Theta \mathcal{L}_{h_\alpha}(\theta) d\Pi_{h_\alpha}(\theta) }\Big|_{\alpha=\alpha_0}\\
    &=\frac{\int_\Theta  g(\theta) \frac{\partial}{\partial \alpha} \Big(\log  h'_\alpha (F_\vartheta(\vartheta))+ \sum_{i=1}^n \log  h'_\alpha (F_X(x_i |  \vartheta)) \Big)\Big|_{\alpha=\alpha_0} d \Pi(\theta |  x_{1:n}) }{\int_\Theta d \Pi{(\theta |  x_{1:n})} } \\
    &- \frac{\int_\Theta g(\theta)   d \Pi(\theta |  x_{1:n}) }{\int_\Theta d \Pi{(\theta |  x_{1:n})} }\frac{\int_\Theta \frac{\partial}{\partial \alpha} \Big( \log  h'_\alpha (F_\vartheta(\vartheta))+ \sum_{i=1}^n \log  h'_\alpha (F_X(x_i |  \vartheta)) \Big)\Big|_{\alpha=\alpha_0} d \Pi(\theta |  x_{1:n}) }{\int_\Theta d \Pi{(\theta |  x_{1:n})} } \\
    &= \cov_{\Pi^n}\Big[g(\vartheta), \frac{\partial}{\partial \alpha} \Big(\log  h'_\alpha (F_\vartheta(\vartheta))+ \sum_{i=1}^n \log  h'_\alpha (F_X(x_i |  \vartheta)) \Big)\Big|_{\alpha=\alpha_0}\Big],
    \end{align*}
 where we used that $ d \Pi_{h_\alpha}{(\theta |  x_{1:n})} = d \Pi{(\theta |  x_{1:n})}$ for $\alpha = \alpha_0$. The proof is concluded.
\end{proof}

\subsection*{Proofs for Section~\ref{sec:classes}}
\begin{proof}[Proof of Proposition~\ref{prop:power_distorsion}]
The proof is a consequence of Theorem~\ref{thm:lik_dist}, however, we illustrate the intermediate steps to exemplify the derivation under the specified power distortion. 
As a first remark, by an explicit calculation, we observe that
\begin{align*}
    \frac{\partial}{\partial \alpha} \prod_{i=1}^n F_X(x_i |  \theta)^{\alpha-1}&= \frac{\partial}{\partial \alpha} \exp\Big((\alpha-1)\sum_{i=1}^n \log F_X(x_i |  \theta)\Big)\\
    &= \sum_{i=1}^n \log F_X(x_i |  \theta) \prod_{i=1}^n F_X(x_i |  \theta)^{\alpha-1}.
\end{align*}
Assumption~\ref{assump:distortion} implies the requirement $F_X(x_i|\theta)\in (0,1]$ $\Pi$-almost surely for all $i=1,\dots,n$, therefore, it follows
\begin{align*}
      &\frac{\partial}{\partial \alpha}\E_{\Pi_{h_\alpha}^n}[g(\vartheta)]\Big|_{\alpha=1}= \frac{\partial}{\partial \alpha} \frac{\int_\Theta \alpha^n g(\theta )(\prod_{i=1}^n F_X(x_i |  \theta)^{\alpha-1}f_X(x_i |  \theta))d\Pi(\theta)}{\int_\Theta \alpha ^n (\prod_{i=1}^n F_X(x_i |  \theta)^{\alpha-1}f_X(x_i |  \theta)) d\Pi(\theta)}\Big|_{\alpha=1}\\
     &= \frac{\int_\Theta \sum_{i=1}^n \log F_X(x_i |  \theta)g(\theta)  (\prod_{i=1}^n  f_X(x_i |  \theta)) d\Pi (\theta)}{\int_\Theta (\prod_{i=1}^n  f_X(x_i |  \theta))   d\Pi(\theta)} \\
     & - \frac{\int_\Theta g(\theta) (\prod_{i=1}^n f_X(x_i |  \theta))  d\Pi (\theta)}{\int_\Theta (\prod_{i=1}^n  f_X(x_i |  \theta) )  d\Pi(\theta)} \frac{\int_\Theta \sum_{i=1}^n \log F_X(x_i |  \theta) (\prod_{i=1}^n  f_X(x_i |  \theta) ) d\Pi (\theta)}{\int_\Theta (\prod_{i=1}^n f_X(x_i |  \theta)  ) d\Pi(\theta)}\\
&=\int_\Theta g(\theta)\sum_{i=1}^n \log F_X(x_i |  \theta)d\Pi(\theta |  x_{1:n})-\int_\Theta g(\theta)d\Pi(\theta |  x_{1:n})\int_\Theta \sum_{i=1}^n \log F_X(x_i |  \theta)d\Pi(\theta |  x_{1:n})\\
&=\cov_{\Pi^n}\Big[g(\vartheta) ,\sum_{i=1}^n \log F_X(x_i |  \vartheta)\Big],
\end{align*}
where we applied the Dominated Convergence Theorem.
The result aligns with Theorem~\ref{thm:lik_dist}. Indeed, notice that  $$\frac{\partial}{\partial \alpha} \log h'_\alpha (y) = \frac{\partial}{\partial \alpha} \log (\alpha y ^{\alpha-1}) =
\frac{1}{\alpha} + \log y,
$$
is continuous in $\alpha$ and bounded for $y \in (0,1]$, so that clearly $h_\alpha(y) = y^\alpha$ satisfies Assumption~\ref{assump:distortion}.
Since
  \begin{align*}
   \sum_{i=1}^n\frac{\partial}{\partial \alpha}  \log  h'_\alpha (F_X(x_i |  \theta))\Big|_{\alpha=1}&= \sum_{i=1}^n   \frac{\partial}{\partial \alpha} \log (\alpha F_X(x_i|\theta)^{\alpha-1})\Big|_{\alpha=1} \\
   &= \sum_{i=1}^n \Big(\frac{1}{\alpha}+\log F_X(x_i|\theta)\Big)\Big|_{\alpha=1}\\
   &= n +\sum_{i=1}^n \log F_X(x_i|\theta),
    \end{align*}
by Theorem~\ref{thm:lik_dist} it follows that 
$$\delta_n^{h_\alpha} = \cov_{\Pi^n}\Big[g(\vartheta) , n + \sum_{i=1}^n \log F_X(x_i |  \vartheta)\Big] = \cov_{\Pi^n}\Big[g(\vartheta) ,\sum_{i=1}^n \log F_X(x_i |  \vartheta)\Big].  $$
\end{proof}

\begin{proof}[Proof of Proposition~\ref{prop:survival_power_distorsion}]
Note that applying a distortion to $F_X$ using $\Tilde h_\alpha(y)= 1-(1-y)^\alpha$ gives the same results as Proposition~\ref{prop:power_distorsion} in terms of $S_X$. In fact, thanks to \eqref{eq:survival} we have that
\begin{align*}
    \frac{\partial }{\partial x} F_{X_{\Tilde h_\alpha}} (x |  \theta)&=\frac{\partial}{\partial x}( 1- (1-F_X(x |  \theta))^\alpha)= \alpha S_X(x |  \theta)^{\alpha-1}f_X(x |  \theta).
\end{align*}
Now, it suffices to notice
\begin{align*}
    \frac{\partial}{\partial \alpha} \prod_{i=1}^n S_X(x_i |  \theta)^{\alpha-1}&= \frac{\partial}{\partial \alpha} \exp \Big((\alpha-1)\sum_{i=1}^n \log S_X(x_i |  \theta)\Big)\\
    &= \sum_{i=1}^n \log S_X(x_i |  \theta) \prod_{i=1}^n S_X(x_i |  \theta)^{\alpha-1},
\end{align*}
and the proof becomes analogous to the proof of Proposition~\ref{prop:power_distorsion}. To conclude, note that $$
\frac{\partial}{\partial \alpha} \log \Tilde{h}'_\alpha(y) = \frac{\partial}{\partial \alpha} \log (\alpha(1-y)^{\alpha-1}) =\frac{1}{\alpha} + \log (1 - y)
$$ is continuous in $\alpha$ and bounded for $y \in [0,1)$, therefore, $\Tilde h_\alpha(y)= 1-(1-y)^\alpha$ clearly satisfies Assumption~\ref{assump:distortion} and the thesis follows directly from Theorem~\ref{thm:lik_dist}.
\end{proof}

\begin{proof}[Proof of Proposition~\ref{prop:predictve_local_sensitivity}]
    Applying the Dominated Convergence Theorem we obtain
    \begin{align*}
      &\delta_{n,\mathrm{pred}}^{h_\alpha}
      =\frac{\partial}{\partial \alpha} \E_{\Bar{P}^{n}_{h_\alpha}}[g(X)]\Big|_{\alpha=\alpha_0}\\
      &= \frac{\partial}{\partial \alpha} \Big(\int_{\mathbb{R}^d}g(x) d\Bar{P}^{n}_{h_\alpha} (x)\Big)\Big|_{\alpha=\alpha_0} \\
      &= \frac{\partial}{\partial \alpha} \Big(\int_{\mathbb{R}^d}\int_\Theta g(x)  d{P}_{\theta, h_\alpha}(x) \, d \Pi_{h_\alpha}(\theta |  x_{1:n}) \Big)\Big|_{\alpha=\alpha_0} \\
    &=\frac{\partial}{\partial \alpha} \Big(\int_{\mathbb{R}^d}  \int_\Theta g(x) f_{X_{h_\alpha}}(x|\theta) d \Pi_{h_\alpha}(\theta |  x_{1:n}) d\lambda(x)\Big)\Big|_{\alpha=\alpha_0}\\
      &= \int_{\mathbb{R}^d} g(x) \int_\Theta f_X(x|\theta)\frac{\partial}{\partial \alpha}  \Big(h'_\alpha(F_X(x|\theta)) d \Pi_{h_\alpha}(\theta |  x_{1:n})\Big)\Big|_{\alpha=\alpha_0} d\lambda(x)\\
&= \int_{\mathbb{R}^d} g(x) \int_\Theta f_X(x|\theta)\frac{\partial}{\partial \alpha}  \frac{h'_\alpha(F_X(x|\theta))\prod_{i=1}^n h'_\alpha(F_X(x_i|\theta))}{f_{X_{h_\alpha,1:n}}(x_{1:n})}\Big|_{\alpha=\alpha_0} \prod_{i=1}^nf_X(x_i|\theta) d \Pi(\theta) d\lambda(x).
    \end{align*}
where $f_{X_{h_\alpha,1:n}}(x_{1:n}) = \int_{\Theta} f_{X_{h_\alpha,1:n}}(x_{1:n}|\theta) d \Pi(\theta)$. Now note that for $h_\alpha(y)=y^\alpha$ and $\alpha_0=1$ we have
\begin{align*}
  \frac{\partial}{\partial \alpha}  f_{X_{h_\alpha,1:n}}(x_{1:n})\Big|_{\alpha=1}&= \int_\Theta\mathcal{L}(\theta)\frac{\partial}{\partial \alpha}\prod_{i=1}^n h'_\alpha(F_X(x_i|\theta))\Big|_{\alpha=1}d\Pi(\theta)\\
  &=\int_\Theta\mathcal{L}(\theta)\Big(\alpha^n \sum_{i=1}^n \log F_X(x_i |  \theta) \prod_{i=1}^n F_X(x_i |  \theta)^{\alpha-1}\Big)\Big|_{\alpha=1}d\Pi(\theta)\\
  &= \int_\Theta\mathcal{L}(\theta) \sum_{i=1}^n \log F_X(x_i |  \theta)d\Pi(\theta)=: f_{X_{1:n}}^{*}(x_{1:n})
\end{align*}
and,
\begin{align*}
   &\frac {\partial}{\partial \alpha}  \Big(h'_\alpha(F_X(x|\theta))\prod_{i=1}^nh'_\alpha(F_X(x_i|\theta))\Big)\Big|_{\alpha=1}\\
&=\Big(h'_\alpha(F_X(x|\theta))\prod_{i=1}^nh'_\alpha(F_X(x_i|\theta))\frac{\partial}{\partial \alpha} \Big(\log h'_\alpha(F_X(x|\theta)) + \sum_{i=1}^n h'_\alpha(F_X(x_i|\theta))  \Big)\Big)\Big|_{\alpha=1}\\
  &=\Big(\alpha F_X(x|\theta)^{\alpha-1}\prod_{i=1}^n\alpha F_X(x_i|\theta)^{\alpha-1}\Big(\frac{1}{\alpha}+\log F_X(x|\theta)+\frac{n}{\alpha}+\sum_{i=1}^n \log F_X(x_i|\theta)\Big)\Big)\Big|_{\alpha=1}\\
  &= 1+n+\log F_X(x|\theta)+ \sum_{i=1}^n \log F_X(x_i|\theta).
\end{align*}
As a consequence, it yields that
\begin{align*}
    &\frac{\partial}{\partial \alpha}  \frac{h'_\alpha(F_X(x|\theta))\prod_{i=1}^n h'_\alpha(F_X(x_i|\theta))}{f_{X_{h_\alpha,1:n}}(x_{1:n})}\Big|_{\alpha=1}\\
    &=\Big( \frac{
   f_{X_{h_\alpha,1:n}}(x_{1:n}) \frac {\partial}{\partial \alpha}  \Big(h'_\alpha(F_X(x|\theta))\prod_{i=1}^nh'_\alpha(F_X(x_i|\theta))\Big)
     }{f_{X_{h_\alpha,1:n}}(x_{1:n})^2}\\
    &\quad\quad - \frac{h'_\alpha(F_X(x|\theta))\prod_{i=1}^nh'_\alpha(F_X(x_i|\theta))
    \frac{\partial}{\partial \alpha }f_{X_{h_\alpha,1:n}}(x_{1:n})}{f_{X_{h_\alpha,1:n}}(x_{1:n})^2}\Big)\Big|_{\alpha=1}\\
    &=\frac{1+n+\log F_X(x|\theta) + \sum_{i=1}^n \log F_X(x_i|\theta)}{f_{X_{1:n}}(x_{1:n})}-\frac{ f_{X_{1:n}}^*(x_{1:n})}{ f_{X_{1:n}}(x_{1:n})^2},
\end{align*}
where $f_{X_{1:n}}(x_{1:n})=\int_\Theta f_{X_{1:n}} (x_{1:n}|\theta )d \Pi(\theta)$, and we used the fact that for $\alpha=1$ it holds that
$    h'_\alpha(F_X(x|\theta))=1$ and $\prod_{i=1}^n h'_\alpha(F_X(x_i|\theta))=1$. 

Note also that Assumption~\ref{assump:distortion} implies the requirement $F_X(x_i|\theta)\in (0,1]$ $\lambda \otimes \Pi$-almost surely for all $i=1,\dots,n$.
Finally, resuming the initial derivation and putting everything together, we obtain the following expression for the posterior predictive local sensitivity under power distortion
\begin{align*}
& \delta_{n,\mathrm{pred}}^{h_\alpha}= \int_{\mathbb{R}^d} g(x) \int_\Theta f_X(x|\theta)\frac{\partial}{\partial \alpha}  \frac{h'_\alpha(F_X(x|\theta))\prod_{i=1}^n h'_\alpha(F_X(x_i|\theta))}{f_{X_{h_\alpha,1:n}}(x_{1:n})}\Big|_{\alpha=\alpha_0} \prod_{i=1}^nf_X(x_i|\theta) d \Pi(\theta) d\lambda(x)\\
&=\int_{\mathbb{R}^d} g(x) \E_{\Pi^n}\Big[ f_X(x|\vartheta)f_{X_{1:n}}(x_{1:n})\frac{\partial}{\partial \alpha}  \frac{h'_\alpha(F_X(x|\vartheta))\prod_{i=1}^n h'_\alpha(F_X(x_i|\vartheta))}{f_{X_{h_\alpha,1:n}}(x_{1:n})}\Big|_{\alpha=\alpha_0}  \Big] d\lambda(x)\\
&=\int_{\mathbb{R}^d} g(x) \E_{\Pi^n}\Big[ f_X(x|\vartheta)\Big(1+n+\log F_X(x|\vartheta)+ \sum_{i=1}^n \log F_X(x_i|\vartheta)-\frac{ f_{X_{1:n}}^*(x_{1:n})}{ f_{X_{1:n}}(x_{1:n})} \Big)\Big] d\lambda(x),
\end{align*}
which is the thesis.
\end{proof}

\begin{proof}[Proof of Proposition~\ref{prop:censoring_distortion_1}]
Since ${h}'_\alpha(y) = \frac{1}{1-\alpha}$ when $y>\alpha$, we have
$$
\frac{\partial}{\partial \alpha}\log{h}'_\alpha(y)= - \frac{\partial}{\partial \alpha} \log(1-\alpha)= \frac{1}{1-\alpha},
$$
for $ y>\alpha$. Moreover, 
    \begin{align*}
     &\sum_{i=1}^n \frac{\partial}{\partial \alpha} \log  {h}'_\alpha (F_X(x_i |  \theta))\Big|_{\alpha=0}
        =\sum_{i=1}^n   \frac{\partial}{\partial \alpha} \log \Big(\frac{1}{1-\alpha} \mathbf{1}\{F_X(x_i|\theta)>\alpha\}\Big)\Big|_{\alpha=0}\\
        &=\sum_{i=1}^n \frac{\frac{1}{(1-\alpha)^2}\mathbf{1}\{F_X(x_i|\theta)>\alpha\} +\frac{1}{1-\alpha}\delta(F_X(x_i|\theta)-\alpha )  }{\frac{1}{1-\alpha} \mathbf{1}\{F_X(x_i|\theta)>\alpha\}}\Big|_{\alpha=0}\\
        &=\sum_{i=1}^n \frac{\mathbf{1}\{F_X(x_i|\theta)>0\} +\delta(F_X(x_i|\theta)-0 )  }{ \mathbf{1}\{F_X(x_i|\theta)>0\}},
    \end{align*}
where $\delta(x)$ is the Dirac delta function. The previous expression is equal to $\infty$ whenever $F_X(x_i|\theta)=0$ for some $i=1,\dots,n$. Therefore, Assumption~\ref{assump:distortion} imposes the restriction to the case where $F_X(x_i|\theta)>0$ $\Pi$-almost surely for all $i=1,\dots,n$. Under such restriction, it holds that
\begin{align*}
   \sum_{i=1}^n \frac{\partial}{\partial \alpha} \log  {h}'_\alpha (F_X(x_i |  \theta))\Big|_{\alpha=0}&=\sum_{i=1}^n \frac{\mathbf{1}\{F_X(x_i|\theta)>0 \}}{\mathbf{1}\{F_X(x_i|\theta)>0 \}}= n
\end{align*}
and, owing to Theorem~\ref{thm:lik_dist}, we obtain
\begin{align*}
   \delta^{ h_\alpha}_n  = \cov_{\Pi^n}\Big[g(\vartheta) ,n\Big]=0,
\end{align*}
which is the thesis.
\end{proof}

\begin{proof}[Proof of Proposition~\ref{prop:censoring_distortion_2}]
Since $\Tilde h'_\alpha(y)= 1/\alpha$ when $y<\alpha$, we have
$$
\frac{\partial}{\partial \alpha}\log{\Tilde h}'_\alpha(y)= -\frac{\partial}{\partial \alpha} \log \alpha  =-\frac{1}{\alpha},
$$
for $y<\alpha$. Moreover,
    \begin{align*}
        &\sum_{i=1}^n\frac{\partial}{\partial \alpha} \log  \Tilde h'_\alpha (F_X(x_i |  \theta))\Big|_{\alpha=1} =\sum_{i=1}^n    \frac{\partial}{\partial \alpha} \log \Big(\frac{1}{\alpha} \mathbf{1}\{F_X(x_i|\theta)<\alpha\}\Big)\Big|_{\alpha=1}\\
        &=\sum_{i=1}^n \frac{- \frac{1}{\alpha^2}\mathbf{1}\{F_X(x_i|\theta)<\alpha\} +\frac{1}{\alpha}\delta(\alpha-F_X(x_i|\theta))   }{\frac{1}{\alpha} \mathbf{1}\{F_X(x_i|\theta)<\alpha\}}\Big|_{\alpha=1}\\
        &=\sum_{i=1}^n \frac{-\mathbf{1}\{F_X(x_i|\theta)<1\} +\delta(1-F_X(x_i|\theta))   }{ \mathbf{1}\{F_X(x_i|\theta)< 1\}},
    \end{align*}
where $\delta(x)$ is the Dirac delta function. The previous expression is equal to $\infty$ whenever $F_X(x_i|\theta)=1$ for some $i=1,\dots,n$. Therefore, Assumption~\ref{assump:distortion} imposes the restriction to the case where $F_X(x_i|\theta)<1$ (which is equivalent to $ S_X(x_i|\theta)>0$) $\Pi$-almost surely for all $i=1,\dots,n$. Under such restriction, it holds that
\begin{align*}
\sum_{i=1}^n  \frac{\partial}{\partial \alpha} \log  \Tilde h'_\alpha (F_X(x_i |  \theta))\Big|_{\alpha=1}= -\sum_{i=1}^n\frac{\mathbf{1}\{S_X(x_i|\theta)>0 \}}{\mathbf{1}\{S_X(x_i|\theta)>0 \}} = -n
\end{align*}
and, owing to Theorem~\ref{thm:lik_dist}, we obtain
\begin{align*}
   \delta^{\Tilde h_\alpha}_n  = \cov_{\Pi^n}\Big[g(\vartheta) ,-n\Big]=0,
\end{align*}
which is the thesis.
\end{proof}

\begin{proof}[Proof of Proposition~\ref{prop:skewing_distortion}]
As observed in \cite{distorsion_priors2016}, the skewing distortion satisfies
\begin{align*}
    h'_\alpha(y)= 2F_X(\alpha F_X^{-1}(y|\theta)|\theta).
\end{align*}
It follows that
$$
\frac{\partial}{\partial \alpha} \log h'_\alpha(y) = \frac{1}{F_X(\alpha F_X^{-1}(y|\theta)|\theta)}  f_X(\alpha F_X^{-1}(y|\theta)|\theta)  F_X^{-1}(y|\theta)
$$
is well-defined at $\alpha=0$. Moreover, since 
\begin{align*}
   &\sum_{i=1}^n\frac{\partial}{\partial \alpha}  \log  {h}'_\alpha (F_X(x_i |  \theta))\Big|_{\alpha=0}= \sum_{i=1}^n \frac{\partial}{\partial \alpha }\log\Big( 2 F_X(\alpha F_X^{-1}(F_X(x_i|\theta)|\theta)|\theta)\Big)\Big|_{\alpha=0}\\
    &= \sum_{i=1}^n \frac{f_X(\alpha x_i|\theta) x_i}{F_X(\alpha x_i|\theta)}\Big|_{\alpha=0}=\frac{f_X(0|\theta)}{F_X(0|\theta)}\sum_{i=1}^n x_i =2 f_X(0|\vartheta)\sum_{i=1}^n x_i,
\end{align*}
the thesis follows directly from Theorem~\ref{thm:lik_dist}.
\end{proof}

\subsection*{Proofs for Section~\ref{sec:asymptotic_behaviour}}
\begin{proof}[Proof of Theorem~\ref{thm:asymp}]
 For $i=1,\dots,n$ let us consider the function
    $$\vartheta \mapsto r_i(\vartheta):= \frac{\partial}{\partial \alpha}\log  (h'_\alpha(F_{X}(X_i|\vartheta)))\Big|_{\alpha=\alpha_0},$$
    thus, we consider 
$$
 \delta^{h_\alpha}_n=\cov_{\Pi^n}\Big[g(\vartheta), \sum_{i=1}^nr_i(\vartheta)\Big],
$$
    where $r_i:\Theta\to \mathbb{R}$ is a function of $\vartheta$ satisfying Assumption~\ref{assump:ftheta} for all $i=1,\dots,n$. Recall also that $g:\Theta \to \mathbb{R}^s$ and $\dim(\Theta)=k$.
    Combining the assumptions of the theorem with Taylor's Theorem, we can expand $r_i$ and $g$ around $\theta_0$ obtaining
    \begin{align*}
    g(\vartheta)=g(\theta_0)+
     J_{\scriptscriptstyle \theta} g(\theta_0)(\vartheta-\theta_0)+ o(d(\vartheta,\theta_0))
    \end{align*}
     where $J_{\scriptscriptstyle \theta} g$ is the $s\times k$ Jacobian matrix of $g$ w.r.t.\ $\theta$, and 
    \begin{align*}
      r_i(\vartheta)=r_i(\theta_0)+\nabla_{\scriptscriptstyle \theta} r_i(\theta_0)^\top(\vartheta-\theta_0)+ o(d(\vartheta,\theta_0))
    \end{align*}
     where $\nabla_{\scriptscriptstyle \theta} r_i$ is the gradient of $r_i$ w.r.t.\ $\vartheta$.
    By assumption the posterior $\Pi^n$ is consistent at $\theta_0$ with contraction rate $\varepsilon_n$ in the semi-metric $d$ under $P^n_{\theta_0}$. In other words,
     $$
   P_{\theta_0}^n \left( d(\vartheta, \theta_0) = O_{\Pi_n}(M_n\varepsilon_n) \right) \to 1
     $$
where $M_n\to\infty$ can be taken arbitrarily slow. With a small abuse of notation, we omit the additional term $P_{\theta_0}^n$ that should be included in the remainder of the Taylor series and we obtain
     \begin{align*}
    g(\vartheta)&=g(\theta_0)+J_{\scriptscriptstyle \theta} g(\theta_0)(\vartheta-\theta_0)+o_{\Pi_n}(M_n\varepsilon_n), \\
    r_i(\vartheta)&=r_i(\theta_0)+\nabla_{\scriptscriptstyle \theta} r_i(\theta_0)^\top(\vartheta-\theta_0)+o_{\Pi_n}(M_n\varepsilon_n).
     \end{align*}
By Assumption~\ref{assump:ftheta}, $J_{\scriptscriptstyle \theta} g(\theta_0) = O(1)$ and
$ \sum_{i=1}^n \nabla_{\scriptscriptstyle \theta} r_i(\theta_0) = O(n)$, thus, using the properties of the covariance operator we have
     \begin{align*}
       \delta^{ h_\alpha}_n&= \cov_{\Pi^n}\Big[g(\vartheta),  \sum_{i=1}^nr_i(\vartheta)\Big]\\
       &= \cov_{\Pi^n}\Big[
     J_{\scriptscriptstyle \theta} g(\theta_0)(\vartheta-\theta_0)+o_{\Pi_n}(M_n\varepsilon_n),  \sum_{i=1}^n \nabla_{\scriptscriptstyle \theta} r_i(\theta_0)^\top (\vartheta-\theta_0) + \sum_{i=1}^n o_{\Pi_n}(M_n\varepsilon_n)\Big] \\
     &=\sum_{i=1}^n  \cov_{\Pi^n}\Big[
     J_{\scriptscriptstyle \theta} g(\theta_0)(\vartheta-\theta_0)+o_{\Pi_n}(M_n\varepsilon_n),   \nabla_{\scriptscriptstyle \theta} r_i(\theta_0)^\top (\vartheta-\theta_0) +  o_{\Pi_n}(M_n\varepsilon_n)\Big] \\
    &=  J_{\scriptscriptstyle \theta} g(\theta_0) \cov_{\Pi^n}[\vartheta,\vartheta]\sum_{i=1}^n \nabla_{\scriptscriptstyle \theta} r_i(\theta_0) +o_{\scriptscriptstyle P^{n}_{\theta_0}}(n M_n^2\varepsilon_n^2)\\
   & = O_{\scriptscriptstyle P^{n}_{\theta_0}}(n M_n^2\varepsilon_n^2),
     \end{align*}
where we exploited the fact that $\E_{\Pi^n}[\vartheta-\theta_0]=O{\scriptscriptstyle P^{n}_{\theta_0}}(M_n\varepsilon_n)$, namely, the posterior mean achieves contraction rate $\varepsilon_n$.
This is a consequence of Theorem~8.8 of \cite{ghosal2017fundamentals} and Assumption~\ref{assump:metric} which imply that
\begin{align*}
d (\E_{\Pi^n}[\vartheta], \theta_0)&\leq M_n \varepsilon_n + \|d\|_\infty^{1/q}\Pi^n(\theta: d(\theta,\theta_0)>M_n\varepsilon_n)^{1/q} =  O{\scriptscriptstyle P^{n}_{\theta_0}}(M_n\varepsilon_n),
\end{align*}
for $q\geq 1$ defined in Assumption~\ref{assump:metric}. To conclude, it suffices to get rid of $M_n$ using its arbitrariness and by observing that, for instance, the sequence $d (\E_{\Pi^n}[\vartheta],\theta_0)/\varepsilon_n$ is tight.
Suppose, by contradiction, that $d(E_{\Pi_n}[\vartheta],\theta_0)/\varepsilon_n$ is not tight. Then there exist $K>0$ and $\delta>0$ such that
$$P_{\theta_0}^n(d(E_{\Pi_n}[\vartheta],\theta_0)/\varepsilon_n > K) > \delta$$
infinitely often.
Now, by arbitrariness of $M_n$, define a diverging $M_n$ equal to $K/C$ on the infinitely many indices where the probability above exceeds $\delta$. On these indices, equality $C M_n = K$ gives
$P_{\theta_0}^n(d(E_{\Pi_n}[\vartheta],\theta_0)/\varepsilon_n > C M_n) > \delta$
infinitely often, contradicting posterior contraction. Hence $d(E_{\Pi_n}[\vartheta],\theta_0)/\varepsilon_n$ is tight and the claim follows.
\end{proof}

\begin{proof}[Proof of Theorem~\ref{thm:clt}]
   By the same arguments and notation of proof of Theorem~\ref{thm:asymp} for $\varepsilon_n=n^{-1/2}$ we have
   \begin{align*}
         \delta^{h_\alpha}_n  \sim  J_{\scriptscriptstyle \theta} g(\theta_0) \cov_{\Pi^n}[\vartheta,\vartheta]\sum_{i=1}^n \nabla_{\scriptscriptstyle \theta} r_i(\theta_0)=  \frac{C_n}{n} \sum_{i=1}^n \nabla_{\scriptscriptstyle \theta} r_i(\theta_0),
   \end{align*}
    where 
    $
    C_n= n J_{\scriptscriptstyle \theta} g(\theta_0)\cov_{\Pi^n}[\vartheta,\vartheta]
    $
    is an $s \times k$ random matrix.
    Now it is not difficult to see that the arguments of the proof of Theorem~\ref{thm:asymp} together with Assumption~\ref{assump:ftheta} imply that $\{C_n\}_{n \in \mathbb{N}}$ is a tight sequence of random matrices (in $P^{n}_{\theta_0}$-probability), i.e.\ $C_n=O{\scriptscriptstyle P^{n}_{\theta_0}}(1)$. Moreover, owing to Theorem~\ref{thm:asymp}, it follows that $\delta_n^{h_\alpha}$ is also bounded in $P^{n}_{\theta_0}$-probability. Thus, let $\delta_0\in\mathbb{R}^s$, centring and re-scaling we obtain
    \begin{align*}
        W_n&= \sqrt{n}(\delta_n ^{h_\alpha}- \delta_0)\\
        &\sim \sqrt{n} \Big(J_{\scriptscriptstyle \theta} g(\theta_0) \cov_{\Pi^n}[\vartheta,\vartheta]\sum_{i=1}^n \nabla_{\scriptscriptstyle \theta} r_i(\theta_0) -\delta_0\Big)\\
         &= \sqrt{n} \Big(\frac{C_n}{n} \sum_{i=1}^n \nabla_{\scriptscriptstyle \theta} r_i(\theta_0)-\delta_0\Big)\\
         &= \frac{C_n}{\sqrt{n}} \sum_{i=1}^n \nabla_{\scriptscriptstyle \theta} r_i(\theta_0)-\sqrt{n} \delta_0\\
        &= \frac{C_n}{\sqrt{n}}\Big(\sum_{i=1}^n \nabla_{\scriptscriptstyle \theta} r_i(\theta_0) -  \frac{n\delta_0}{C_n}\Big).
    \end{align*}
Therefore, since $r_i(\vartheta)= \frac{\partial}{\partial \alpha}\log  (h'_\alpha(F_{X}(X_i|\vartheta)))|_{\alpha=\alpha_0}$ and $n ^{-1/2}\sum_{i=1}^n X_i$ is asymptotically Gaussian, we invoke the Central Limit Theorem to conclude that there exist $\delta_0\in \mathbb{R}^s$ and $\Sigma\in  \mathbb{R}^{s\times s}$ appropriately chosen, such that $Z_n=\Sigma^{-1/2}W_n$ converges in distribution to a standard multivariate Gaussian r.v. $Z\sim \mathcal{N}(0,I)$.
\end{proof}
\bibliographystyle{apalike}
\bibliography{bib.bib}

\end{document}